\tikzset{
>=stealth',
help lines/.style={dashed, thick},
axis/.style={<->},
important line/.style={thick},
connection/.style={thick, dotted},
}
\newtheorem{defn}{Definition}
\newtheorem{lem}{Lemma}
\newtheorem{cor}{Corollary}
\newtheorem{remark}{Remark}
\newtheorem{thm}{Theorem}
\newtheorem{prop}{Proposition}
\newtheorem{assumption}{Assumption}
\title{Non-linear PDE Approach to Time-Inconsistent Optimal Stopping}
\author{Christopher W. Miller\footnotemark[1]}
\date{}
\begin{document}

\maketitle

\footnotetext[1]{Department of Mathematics, University of California, Berkeley. Supported in part by NSF GRFP under grant number DGE 1106400.}

\begin{abstract}
We present a novel method for solving a class of time-inconsistent optimal stopping problems by reducing them to a family of standard stochastic optimal control problems. In particular, we convert an optimal stopping problem with a non-linear function of the expected stopping time in the objective into optimization over an auxiliary value function for a standard stochastic control problem with an additional state variable. This approach differs from the previous literature which primarily employs Lagrange multiplier methods or relies on exact solutions. In contrast, we characterize the auxiliary value function as the unique viscosity solution of a non-linear elliptic PDE which satisfies certain growth constraints and investigate basic regularity properties. We demonstrate a connection between optimal stopping times for the original problem and optimal controls of the auxiliary control problem. Finally, we discuss extensions to more general dynamics and time-inconsistencies, as well as potential connections to degenerate Monge-Ampere equations.
\end{abstract}

\begin{keywords}Non-linear optimal stopping, time-inconsistency, Hamilton-Jacobi-Bellman equation, sequential optimization, stochastic optimal control.\end{keywords}

\begin{AMS}60G40, 93E20.\end{AMS}

\pagestyle{myheadings}
\thispagestyle{plain}
\markboth{C. W. Miller}{PDE Approach to Time-Inconsistent Optimal Stopping}

\section{Introduction}\label{Section:Introduction}

Time-inconsistent optimal stopping and optimal stochastic control problems are characterized by the failure of standard dynamic programming arguments to apply. Time-inconsistency usually results in practice from a non-linearity in the objective function, and has previously been studied in the economic and financial context primarily related to non-exponential discounting and mean-variance portfolio selection (see \cite{Bjork2014,PedersenPeskir2013,Yong2012,Rudloff2013}). The two common approaches to dealing with time-inconsistency in the literature are to reformulate the problem as a time-consistent problem, while possibly changing the value function, or to employ a ``pre-commitment strategy'' which may need to be recomputed for each new initial condition.

In this paper, we present a novel method for solving a class of time-inconsistent optimal stopping problems by reducing them to a family of standard stochastic control problems. We emphasize the following three-step strategy for dealing with time-inconsistency: (1) Condition on the time-inconsistent feature to obtain a constrained problem, (2) embed the constrained problem in a time-consistent problem in a higher-dimensional state-space, and finally, (3) construct an optimal control of the original problem by starting at an optimal choice of the new state variables. This procedure allows us to compute the value function of a pre-commitment strategy, and construct optimal stopping times under suitable regularity. While the first and last step are well-understood in the previous literature (see \cite{PedersenPeskir2013,PedersenPeskir2016}), the main contribution of this paper is our second step, which is typically replaced by an application of Lagrange multipliers or an appeal to exact solutions.

The particular type of time-inconsistency featured in this paper is a non-linear function of the expected stopping time appearing in the objective function. Following the ideas developed in \cite{PedersenPeskir2013}, we condition on the expected value of the stopping time to obtain an expectation-constrained optimal stopping problem. In contrast with the previous literature, we embed the constrained optimal stopping problem into a time-consistent control problem with one extra state variable rather than employing the method of Lagrange multipliers. We characterize this auxiliary value function as the viscosity solution of a degenerate-elliptic Hamilton-Jacobi-Bellman (HJB) equation subject to certain growth constraints. Furthermore, we demonstrate how to construct an optimal stopping time, subject to regularity conditions, for the original problem by solving a sequential concave optimization problem.

The overarching idea of our approach is inspired by that reported in \cite{PedersenPeskir2013}. In that paper, the authors solve a mean-variance stopping problem with a similar non-linearity by conditioning on the time-inconsistent feature and solving the resulting constrained stopping problem with free-boundary techniques and verification arguments along the lines of \cite{PeskirShiryaev2006}. In comparison, the reader may view the main contribution of this paper as a novel solution of the expectation-constrained optimal stopping problem by embedding in a time-consistent stochastic control problem.

The investigation of constrained optimal stopping problems is not new, but most previous work focuses on Lagrange multiplier approaches to the constraint (see \cite{Kennedy1982,LopezSanMiguel1995,Horiguchi2001,Makasu2009}). In contrast, we identify the expectation-constrained auxiliary value function as the unique viscosity solution of degenerate-elliptic Hamilton-Jacobi-Bellman equation, subject to certain growth constraints. The main advantage of our approach is that it depends neither on the specific form of non-linearity in the problem, nor on the availability of analytic solutions. When analytic solutions are not available, a solution via the method of Lagrange multipliers generally relies on a numerical optimization of the Lagrange dual problem. While effective, this approach can be unstable in practice since we have no regularity estimates on the dual function apart from convexity, and computation of the sub-gradient is often subject to truncation error. In contrast, the non-linear elliptic PDE featured in this paper has established numerical approximation schemes with guaranteed convergence and, more importantly, stability (see \cite{Oberman2008}). 

The ideas developed in this paper can be extended to deal with other types of time-inconsistent features in optimal stopping and optimal stochastic control problems. We primarily emphasize the case of optimal stopping due to its relative technical and pedagogical simplicity. We briefly discuss extensions to diffusion processes and more general types of time-inconsistencies in the final section, albeit formally. The ideas are related to recent solutions of mean-variance portfolio optimization, optimal control under certain non-linear risk-measures, and distribution-constrained optimal stopping (see \cite{PedersenPeskir2016,MillerYang2015,BayraktarMiller2016}).

Our approach is similar to the dynamic approach of \cite{KarnamMaZhang2016}, wherein the authors introduce extra state variables to remove time-inconsistency introduced by a system of controlled backwards stochastic differential equations. Similar to this paper, the authors convert a problem without an immediate dynamic nature to a dynamic problem with additional state variables. There appear to be additional analogies with the formal generalizations provided in Section~\ref{Section:Generalization}. In terms of a focus on time-inconsistent optimal stopping problems, this paper is similar to \cite{XuZhou2013}, where the authors consider a non-linear functional of a stopped process as the objective function. However, the method of solution differs entirely. Whereas these authors relate their problem to optimization over the distribution of the stopped process using Skorokhod embedding, we convert to a dynamic problem in a larger state-space.

We note that since the original circulation of this paper, the same elliptic PDE has been obtained independently in \cite{AnkirchnerKleinKruse2015} in a direct analysis of the expectation-constrained optimal stopping problem. The authors place relatively more emphasis on further analysis of the expectation-constrained optimal stopping problem, which may complement a reading of this paper by providing, for instance, specific examples of various degenerate behavior in the problem. In contrast, this paper spends more time on the analysis of the outer-optimization problem which is unique to our class of time-inconsistent optimal stopping problems. In our paper, we largely relegate consideration of degeneracies or regularity issues from the forefront by the use of viscosity solutions in the analysis of the expectation-constrained optimal stopping problem.

Some other notable works in the literature on time-inconsistent problems include \cite{Bjork2014,Hu2012,Yong2012,Ekeland2010}. Most of the previous literature focuses either on specific examples of time-inconsistency (often arising from non-exponential discounting or mean-variance optimization) or on notions of equilibrium strategies, which view time-inconsistent problems as a sequential game against ones future self. In an equilibrium strategy, the optimal equilibrium strategy can be characterized as the solution to an ``extended HJB'' system. In general, these systems feature multiple solutions and exhibit values strictly less than the value function of a pre-committed strategy. The price we must pay in our approach is that the entire value function must be recomputed if we change the initial conditions. This point is related to the notions of static and dynamic optimality which are explored in depth by \cite{PedersenPeskir2013} and \cite{PedersenPeskir2016}.

This paper proceeds as follows: In Section \ref{Section:Formulation}, we formulate a time-inconsistent optimal stopping problem featuring a non-linearity in the expectation of the stopping time. In Section \ref{Section:EquivalentProblem}, we show how this is equivalent to a sequential optimization problem which is time-consistent at each step. We characterize the value function of the time-consistent subproblem as the viscosity solution of a HJB equation. In Section \ref{Section:Properties}, we prove various bounds and concavity properties of the auxiliary value function and characterize it as the unique viscosity solution of an HJB subject to certain growth constraints. We also show that the sequential optimization problem is concave, which has practical implications in terms of regularity of the value function and existence of maximizers. Finally, in Section \ref{Section:Generalization}, we provide remarks on connections and generalizations of this approach. In particular, we show how to generalize the underlying dynamics and handle more general non-linearities. We also highlight a connection between our HJB and the Monge-Ampere equation.

\section{Problem Formulation}\label{Section:Formulation}

Let $W$ be a standard Brownian motion on a probability space $(\Omega,\mathcal{F},\mathbb{P})$ with filtration $\mathbb{F}=\{\mathcal{F}_t\}_{t\geq 0}$ satisfying the usual properties. Let $\mathbb{P}^x$ denote the measure corresponding to $W$ starting from a given point $x$. Denote by $\mathcal{T}$ the collection of all $\mathbb{F}$-stopping times such that $\mathbb{E}^x\left[\tau^2\right]<\infty$.\footnote{It is worth noting that the restriction to finite-variance stopping times is not a natural condition in traditional optimal stopping applications and may lead to lack of an optimal stopping time. However, it is the natural restriction within our analysis, which relies heavily on the application of the martingale representation theorem. In Section \ref{Subsection:Construction}, we discuss the construction of $\epsilon$-suboptimal stopping times in cases that an optimal stopping time fails to exist.}

Let $f:\mathbb{R}\to\mathbb{R}$ be continuous and bounded from above\footnote{Note we require the Brownian motion to be one-dimensional here, although the key results carry over to multi-dimensional versions easily. For a formal conversation on higher-dimensional analogues, refer to Section~\ref{Subsection:Extension}.}. Let $g:\mathbb{R}^+\to\mathbb{R}$ be continuous. Under these conditions, we define a criterion as
\begin{equation}\nonumber
J\left[x,\tau\right] := \mathbb{E}^x\left[ f(W(\tau))\right] + g\left(\mathbb{E}^x\left[\tau\right]\right).
\end{equation}

\begin{defn}
The time-inconsistent optimal stopping problem is defined by
\begin{equation}\label{Equation:MainProblem}
v(x) := \sup\limits_{\tau\in\mathcal{T}} J\left[x,\tau\right]\hspace{0.5cm}\text{for all }x\in\mathbb{R}.\end{equation}\end{defn}

The main result of this paper is that we can re-cast this time-inconsistent stopping problem as a sequential optimization problem\footnote{Here, we refer to the problem as ``sequential'' in the sense that there are two optimization problems which are solved sequentially. First, we solve an auxiliary stochastic optimal control problem. Then, we optimize over starting values of the additional state variable.} involving a time-consistent stopping problem. In particular, let $\mathcal{A}$ represent the collection of all square-integrable, progressively-measurable processes and consider a time-consistent stochastic control problem.
\begin{defn}We define the auxiliary time-consistent control problem as
\begin{equation}\label{Defn:AuxProblem}w(x,y):=\left\{\begin{array}{rl}
\sup\limits_{\alpha\in\mathcal{A}}&\mathbb{E}^{x,y}\left[f(W(\tau^\alpha))\right]\\
& dY^\alpha = -dt + \alpha dW,\,Y^\alpha_0=y\\
& \tau^\alpha = \inf\left\{t\geq 0\mid Y^\alpha(t)=0\right\}
\end{array}\right.\end{equation}
for all $(x,y)\in\mathbb{R}\times [0,\infty)$.\end{defn}

The focus of this paper is on the following equivalence between $v$ and optimization over $w$.
\begin{equation}\label{Eqn:SeqOptimization}
v(x) = \sup\limits_{y\geq 0}\left[w(x,y) + g(y)\right].
\end{equation}
Under certain growth assumptions on the functions $f$ and $g$, we show the following results:
\begin{enumerate}
\item The auxiliary value function $w$ is the unique viscosity solution of an HJB with certain growth restrictions (Theorem \ref{Thm:UniqueHJB}),
\item The sequential optimization problem (\ref{Eqn:SeqOptimization}) is concave in $y$ (Theorem \ref{Thm:MaximalElement}), and
\item Under additional regularity conditions, we can construct an optimal stopping time $\tau$ for the original problem from an optimal control in the auxiliary control problem (Theorem \ref{Thm:StoppingTimeConstruction}).
\end{enumerate}
We also note that the three-step procedure we follow in this problem suggests a more general approach to other kinds of non-linearities in time-inconsistent optimal stopping and optimal stochastic control problems. We formally highlight this procedure in more generality in Section \ref{Subsection:GeneralInconsistentcies}.

\section{Equivalent Sequential Time-Consistent Problem}\label{Section:EquivalentProblem}

Our goal is to convert this time-inconsistent optimal stopping problem into a sequential optimization problem involving a time-consistent control problem. Our general approach to time-inconsistency is the following:
\begin{itemize}
\item Step 1: Condition on any time-inconsistent features in the problem to generate a family of optimal-stopping problems with constraints,
\item Step 2: Enlarge the state-space to embed the constrained problems in a time-consistent problem, and
\item Step 3: Construct an optimal stopping time for the time-inconsistent problem by picking an optimal value of the time-inconsistent feature and generating an optimal solution to the time-consistent problem starting from that choice.
\end{itemize} 
In our particular problem, this will involve adding a new state variable to track the expectation of the optimal stopping time. As the system evolves, we expect this variable to be a super-martingale as the expected time until stopping drifts downward. However, it's possible to allow this expected stopping time to increase along certain paths so-long as it is compensated by a decrease along other paths.

\subsection{Conditioning on Time-Inconsistent Features}

For any $(x,y)\in\mathbb{R}\times[0,\infty)$, consider the following subset of stopping times:
\begin{equation}\nonumber
\mathcal{T}_{x,y} := \left\{\tau\in\mathcal{T}\mid\mathbb{E}^x\left[\tau\right]=y\right\}.
\end{equation}

Furthermore, consider the following family of constrained optimal stopping problems.

\begin{defn}\label{Defn:ConstrainedFunctions}Define a family of constrained optimal stopping problems by
\[\tilde{w}(x,y) := \sup\limits_{\tau\in\mathcal{T}_{x,y}} \mathbb{E}^x\left[f(W(\tau))\right]\hspace{0.5cm}\text{for all }(x,y)\in\mathbb{R}\times [0,\infty).\]\end{defn}

We first claim that we can reformulate the time-inconsistent optimal stopping problem \eqref{Equation:MainProblem} as a sequential optimization problem involving these constrained optimal stopping problems.

\begin{thm}[\cite{PedersenPeskir2013}]
\label{Thm:AuxEquivalence}We can reformulate $v$ in terms of $\tilde{w}$ as
\[v(x) = \sup\limits_{y\geq 0} \left[\tilde{w}(x,y) + g(y)\right]\hspace{0.5cm}\text{for all }(x,y)\in\mathbb{R}\times [0,\infty).\]\end{thm}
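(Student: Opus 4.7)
My plan is to prove the equivalence by partitioning the set of stopping times $\mathcal{T}$ according to the value of the time-inconsistent feature $\mathbb{E}^x[\tau]$. The key observation is that once we fix $y = \mathbb{E}^x[\tau]$, the non-linear term $g(\mathbb{E}^x[\tau]) = g(y)$ becomes a deterministic constant that can be pulled outside the supremum, converting the time-inconsistent objective into a time-consistent (linear in expectation) one on each piece of the partition.

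For the upper bound $v(x) \leq \sup_{y\geq 0}[\tilde{w}(x,y) + g(y)]$, I would take any $\tau \in \mathcal{T}$, set $y := \mathbb{E}^x[\tau] \in [0,\infty)$ (which is finite by the $L^2$ assumption on $\mathcal{T}$), so that $\tau \in \mathcal{T}_{x,y}$. Then
\[J[x,\tau] = \mathbb{E}^x[f(W(\tau))] + g(y) \leq \tilde{w}(x,y) + g(y) \leq \sup_{y'\geq 0}\bigl[\tilde{w}(x,y') + g(y')\bigr],\]
and taking the supremum over $\tau \in \mathcal{T}$ yields the inequality.

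For the reverse inequality, I would note that $\mathcal{T} = \bigsqcup_{y \geq 0} \mathcal{T}_{x,y}$. In particular, for every $y \geq 0$ the set $\mathcal{T}_{x,y}$ is nonempty (the constant stopping time $\tau \equiv y$ belongs to it), so $\tilde{w}(x,y)$ is well-defined and $> -\infty$. For any $y \geq 0$ and any $\tau \in \mathcal{T}_{x,y}$, we have
\[\mathbb{E}^x[f(W(\tau))] + g(y) = \mathbb{E}^x[f(W(\tau))] + g(\mathbb{E}^x[\tau]) = J[x,\tau] \leq v(x).\]
Taking the supremum over $\tau \in \mathcal{T}_{x,y}$ gives $\tilde{w}(x,y) + g(y) \leq v(x)$, and then taking the supremum over $y \geq 0$ yields the matching bound.

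I do not expect any substantial obstacle here; the argument is essentially a tautology expressing the law of iterated suprema $\sup_{\tau} = \sup_{y} \sup_{\tau \in \mathcal{T}_{x,y}}$, combined with the fact that $g(\mathbb{E}^x[\tau])$ is constant on each $\mathcal{T}_{x,y}$. The only minor issue to address is ensuring that the partition is exhaustive and that each $\mathcal{T}_{x,y}$ is nonempty, both of which follow from the fact that $\mathbb{E}^x[\tau] < \infty$ for $\tau \in \mathcal{T}$ and that deterministic times belong to $\mathcal{T}$. No measurability or regularity subtleties arise because the conditioning is on a single scalar expectation rather than on a pathwise quantity.
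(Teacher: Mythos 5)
Your argument is correct and is essentially the paper's own proof: both rest on the partition $\mathcal{T}=\bigcup_{y\geq 0}\mathcal{T}_{x,y}$ and the law of iterated suprema, with $g(\mathbb{E}^x[\tau])=g(y)$ constant on each piece. Writing out the two inequalities and noting nonemptiness of $\mathcal{T}_{x,y}$ via $\tau\equiv y$ merely makes explicit what the paper's chain of equalities compresses.
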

\begin{proof}
The key is to note that for each $x\in\mathbb{R}$,
\[\mathcal{T} = \bigcup_{y\geq 0}\mathcal{T}_{x,y}.\]
Then we check that
\begin{eqnarray}
v(x) & = & \sup\limits_{\tau\in\mathcal{T}} J\left[x,\tau\right]\nonumber\\
& = & \sup\limits_{y\geq 0}\sup\limits_{\tau\in\mathcal{T}_{x,y}} \left[\mathbb{E}^x\left[f(W(\tau ))\right]+ g\left(\mathbb{E}^x\left[\tau\right]\right)\right]\nonumber\\
& = & \sup\limits_{y\geq 0}\sup\limits_{\tau\in\mathcal{T}_{x,y}}\left[\mathbb{E}\left[f(W(\tau ))\right]+ g(y)\right]\nonumber\\
& = & \sup\limits_{y\geq 0}\left[\tilde{w}(x,y)+g(y)\right].\nonumber
\end{eqnarray}
\end{proof}

\subsection{Equivalence to Time-Consistent Optimal Control}

Next, we reformulate the constrained optimal stopping problem as a time-consistent stochastic control problem. Let $\mathcal{A}$ be the set of all real-valued, progressively-measurable, and square-integrable processes. We begin this section by identifying $\tau\in\mathcal{T}_{x,y}$ with a control in $\mathcal{A}$.

\begin{lem}\label{Lem:MRT}
Let $\tau\in\mathcal{T}_{x,y}$. Then there exists $\alpha\in\mathcal{A}$ such that
\[\tau = \inf\left\{t\geq 0\mid y-t+\int_0^t\alpha(s)dW(s)=0\right\}\hspace{0.5cm}\mathbb{P}^x\text{-a.s.}\]
Similarly, let $\alpha\in\mathcal{A}$ and define
\[\tau^\alpha := \inf\left\{t\geq 0\mid y-t+\int_0^t \alpha(s)dW(s)=0\right\}.\]
Then $\tau^\alpha \in \mathcal{T}_{x,y}$.
\end{lem}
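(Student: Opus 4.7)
The proof splits naturally into the two implications stated in the lemma. The key tool for the first direction is the martingale representation theorem applied to the conditional expectation of $\tau$, and the key tool for the second direction is an optional sampling argument combined with the strict negative drift of the process $Y^\alpha(t)=y-t+\int_0^t\alpha\,dW$.

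\textbf{Direction 1 (stopping time to control).} Given $\tau\in\mathcal{T}_{x,y}$, I would consider the martingale $M(t):=\mathbb{E}^x[\tau\mid\mathcal{F}_{t\wedge\tau}]$. Because $\mathbb{E}^x[\tau^2]<\infty$, this is a square-integrable martingale, so the martingale representation theorem yields some progressively measurable $\alpha$ with $\mathbb{E}^x\!\left[\int_0^\infty\alpha^2\,ds\right]<\infty$ and $M(t)=y+\int_0^t\alpha\,dW$; moreover $\alpha$ may be taken to vanish after $\tau$. Defining $Y(t):=y-t+\int_0^t\alpha\,dW=M(t)-t\wedge\tau$ (up to stopping), I would verify $Y(\tau)=M(\tau)-\tau=\tau-\tau=0$, so $Y$ does hit zero by time $\tau$. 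The delicate point is that $\tau$ is actually the \emph{first} such hitting time: on $\{t<\tau\}\in\mathcal{F}_t$, the random variable $\tau-t$ is strictly positive, so $Y(t)=\mathbb{E}^x[\tau-t\mid\mathcal{F}_t]>0$ almost surely on that event by the standard argument that a nonnegative random variable with zero conditional expectation vanishes a.s. This gives $\tau=\inf\{t\geq 0:Y(t)=0\}$, as desired.

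\textbf{Direction 2 (control to stopping time).} Given $\alpha\in\mathcal{A}$, I would first argue that $\tau^\alpha<\infty$ a.s.: the stochastic integral $N(t):=\int_0^t\alpha\,dW$ is an $L^2$-bounded martingale, hence converges a.s. as $t\to\infty$, so $Y^\alpha(t)=y-t+N(t)\to-\infty$ and by path continuity must cross zero in finite time. To compute $\mathbb{E}^x[\tau^\alpha]$, I would apply optional sampling to the martingale $N$ (legitimate because $N$ is uniformly integrable) to get $\mathbb{E}^x[N(\tau^\alpha)]=0$; evaluating $Y^\alpha(\tau^\alpha)=0$ then yields $\tau^\alpha=y+N(\tau^\alpha)$, hence $\mathbb{E}^x[\tau^\alpha]=y$. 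For the finite second moment, the same identity gives $(\tau^\alpha)^2\leq 2y^2+2N(\tau^\alpha)^2$, and optional sampling on the submartingale $N^2-\langle N\rangle$ together with It\^o isometry yields $\mathbb{E}^x[N(\tau^\alpha)^2]=\mathbb{E}^x\!\left[\int_0^{\tau^\alpha}\alpha^2\,ds\right]<\infty$, proving $\tau^\alpha\in\mathcal{T}_{x,y}$.

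\textbf{Anticipated difficulty.} The main obstacle is the ``first hitting time'' verification in Direction 1: constructing \emph{some} $\alpha$ for which $Y$ vanishes at $\tau$ is immediate from the martingale representation theorem, but showing $Y$ stays strictly positive on $[0,\tau)$ requires the strict-positivity-of-conditional-expectation step. A secondary technical point is whether ``square-integrable'' in the definition of $\mathcal{A}$ means $\mathbb{E}^x[\int_0^\infty\alpha^2\,ds]<\infty$ or merely $\mathbb{E}^x[\int_0^T\alpha^2\,ds]<\infty$ for each $T$; the optional sampling arguments in Direction 2 assume the former (or at least integrability up to $\tau^\alpha$), so one may need to localize and pass to the limit using monotone convergence if the weaker notion is in force.
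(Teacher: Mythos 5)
Your overall strategy coincides with the paper's (martingale representation for the forward direction, optional sampling and the It\^o isometry for the converse, with the second-moment bound $\mathbb{E}^x[(\tau^\alpha)^2]\leq y^2+\mathbb{E}^x[\int_0^\infty\alpha^2\,ds]$ appearing in both), and your Direction 2 is correct --- indeed you justify the a.s.\ finiteness of $\tau^\alpha$ more explicitly than the paper does. However, there is a genuine gap at exactly the point you call delicate in Direction 1. Your positivity argument is a fixed-time statement: for each deterministic $t$ you get $Y(t)=\mathbb{E}^x[\tau-t\mid\mathcal{F}_t]>0$ a.s.\ on $\{t<\tau\}$, with an exceptional null set depending on $t$. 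Intersecting over rational $t$ and invoking continuity of $Y$ only yields $Y\geq 0$ on $[0,\tau)$ almost surely; it does not exclude the event that $Y$ touches zero at a \emph{random} time strictly before $\tau$ and bounces back, in which case $\inf\{t\geq 0: Y(t)=0\}<\tau$. Since the drift is fixed at $-1$ but the martingale part is unconstrained, strict positivity at all deterministic times is compatible a priori with a zero of $Y$ at a non-deterministic time, so the conclusion $\tau=\inf\{t\geq 0:Y(t)=0\}$ does not follow from what you wrote.

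The repair is short and is precisely the paper's argument: set $\sigma:=\inf\{t\geq 0: Y(t)=0\}$, which satisfies $\sigma\leq\tau$ because $Y(\tau)=0$. Applying optional sampling to the closed martingale $t\mapsto y+\int_0^t\alpha\,dW$ at $\sigma$ gives $\mathbb{E}^x[\tau\mid\mathcal{F}_\sigma]=y+\int_0^\sigma\alpha\,dW$, and by definition of $\sigma$ this means $\mathbb{E}^x[\tau-\sigma\mid\mathcal{F}_\sigma]=Y(\sigma)=0$. Taking expectations, $\mathbb{E}^x[\tau]=\mathbb{E}^x[\sigma]$ with $\sigma\leq\tau$ integrable, hence $\sigma=\tau$ a.s. In other words, the conditional-positivity idea you use must be applied at the stopping time $\sigma$ itself rather than at deterministic times; once you do that, the rest of your proposal goes through.
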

\begin{proof}
\begin{enumerate}
\item Because $\tau\in\mathcal{T}_{x,y}$ is a square-integrable random variable with expectation $y$, there exists a progressively-measurable, square-integrable process $\alpha$ such that
\[\tau = y + \int_0^\infty\alpha(s)dW(s)\hspace{0.5cm}\mathbb{P}^x\text{-a.s.}\]
by the martingale representation theorem. Taking conditional expectation with respect to $\mathcal{F}_\tau$, we see
\[\tau = y + \int_0^\tau\alpha(s)dW(s)\hspace{0.5cm}\mathbb{P}^x\text{-a.s.}\]

Define a stopping time
\[\sigma:=\inf\left\{t\geq 0\mid y-t+\int_0^t\alpha(s)dW(s)=0\right\}.\]
Then it is clear that $\sigma\leq\tau$ ($\mathbb{P}^x$-a.s.). Suppose that $\mathbb{P}^x\left[\sigma <\tau\right]>0$. Taking expectations of the martingale representation result conditional on $\mathcal{F}_\sigma$, we see
\[\mathbb{E}^x\left[\tau\mid\mathcal{F}_\sigma\right] = y + \int_0^\sigma\alpha(s)dW(s).\]
But by the definition of $\sigma$, this implies
\[\mathbb{E}^x\left[\tau-\sigma\mid\mathcal{F}_\sigma\right] = y - \sigma + \int_0^\sigma\alpha(s)dW(s) = 0.\]
Taking expectation, we see $\mathbb{E}^x\left[\tau\right]=\mathbb{E}^x\left[\sigma\right],$ which contradicts the assumption that $\mathbb{P}^x\left[\sigma<\tau\right]>0$. Then $\sigma=\tau$ $\mathbb{P}^x$-almost surely.

\item Fix $\alpha \in \mathcal{A}$ and define $\tau^\alpha := \inf\left\{t\geq 0\mid y-t+\int_0^t \alpha(s)dW(s)\right\}$. Then
\[y - \tau^\alpha + \int_0^{\tau^\alpha}\alpha(s)dW(s) = 0,\,\mathbb{P}^x\text{-a.s.}\]
Because $\alpha$ is assumed square-integrable, it follows that $\tau^\alpha$ is finite almost-surely. Then we can compute $\mathbb{E}^x\left[\tau^\alpha\right]=y$ and, furthermore,
\[\mathbb{E}^x\left[\left(\tau^\alpha\right)^2\right] = y^2 + \mathbb{E}^x\left[\int_0^{\tau^\alpha}\alpha(s)^2ds\right] \leq y^2 + \mathbb{E}^x\left[\int_0^\infty \alpha(s)^2ds\right] < +\infty.\]
Then $\tau^\alpha \in \mathcal{T}_{x,y}$.

\end{enumerate}
\end{proof}

The key to the main result is to convert between stopping times in $\mathcal{T}_{x,y}$ and controls in $\mathcal{A}$ via Lemma \ref{Lem:MRT} and instead view $w$ as the value function of a stochastic optimal control problem.

\begin{thm}\label{Thm:Equivalence}
Recall the stochastic control problem (\ref{Defn:AuxProblem}). We have the equivalence
\[w(x,y) = \tilde{w}(x,y).\]\end{thm}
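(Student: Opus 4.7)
The plan is to prove the two inequalities $\tilde{w}(x,y) \leq w(x,y)$ and $w(x,y) \leq \tilde{w}(x,y)$ separately, with both directions reducing to a direct application of Lemma \ref{Lem:MRT}. That lemma is really the whole content: it establishes that, modulo $\mathbb{P}^x$-almost sure equality, the family of stopping times $\mathcal{T}_{x,y}$ coincides with the family of first hitting times of zero by the controlled process $Y^\alpha = y - t + \int_0^\cdot \alpha\, dW$ as $\alpha$ ranges over $\mathcal{A}$. Once this bijective correspondence is in hand, the equality of the two value functions is just a matter of rewriting the supremum on each side.

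For $\tilde{w}(x,y) \leq w(x,y)$, I would fix an arbitrary $\tau \in \mathcal{T}_{x,y}$ and apply part one of Lemma \ref{Lem:MRT} to produce a control $\alpha \in \mathcal{A}$ with $\tau = \tau^\alpha$ $\mathbb{P}^x$-almost surely. Then $\mathbb{E}^x[f(W(\tau))] = \mathbb{E}^{x,y}[f(W(\tau^\alpha))] \leq w(x,y)$, and taking supremum over $\tau$ finishes this direction. For the reverse inequality $w(x,y) \leq \tilde{w}(x,y)$, I would fix $\alpha \in \mathcal{A}$ and apply part two of Lemma \ref{Lem:MRT} to conclude $\tau^\alpha \in \mathcal{T}_{x,y}$, so that $\mathbb{E}^{x,y}[f(W(\tau^\alpha))] \leq \tilde{w}(x,y)$, and supremum over $\alpha$ completes the argument.

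I do not anticipate a substantive obstacle, since Lemma \ref{Lem:MRT} already performs the two non-trivial checks, namely the identification $\tau = \sigma$ using conditional expectations with respect to $\mathcal{F}_\sigma$ together with the martingale representation, and the verification that $\tau^\alpha$ has mean $y$ and finite second moment. The only subtle point I would be careful about is that the expectation in the definition of $w$ is written $\mathbb{E}^{x,y}$, referring to the joint initial condition $(W_0, Y^\alpha_0) = (x,y)$; since $y$ is a deterministic initial value and $Y^\alpha$ is adapted to the Brownian filtration, the distribution of $(W, Y^\alpha)$ under $\mathbb{P}^{x,y}$ agrees with that under $\mathbb{P}^x$ with the deterministic start $Y^\alpha_0 = y$, so the two expectations genuinely coincide and nothing is lost in passing between the two formulations.
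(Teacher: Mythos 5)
Your proposal is correct and follows essentially the same route as the paper: both inequalities are obtained by passing between stopping times in $\mathcal{T}_{x,y}$ and controls in $\mathcal{A}$ via the two parts of Lemma~\ref{Lem:MRT}, and your pairing of lemma parts with inequality directions is the natural one (the paper's write-up states the same two constructions but attaches the concluding inequalities somewhat loosely). Your closing remark on $\mathbb{E}^{x,y}$ versus $\mathbb{E}^x$ is a fine clarification but not a substantive difference.
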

\begin{proof}
Fix any $(x,y)\in\mathbb{R}\times [0,\infty)$.
\begin{enumerate}
\item Let $\alpha\in\mathcal{A}$ be an arbitrary control and consider the stopping time
\[\tau^\alpha := \inf\left\{t\geq 0\mid Y^\alpha(t)=0\right\}.\]
By Lemma~\ref{Lem:MRT} we have $\tau^\alpha \in \mathcal{T}_{x,y}$. Then
\[\tilde{w}(x,y)\leq\sup\limits_{\alpha\in\mathcal{A}}\mathbb{E}^{x,y}\left[f(W(\tau^\alpha))\right] = w(x,y),\]
because $\alpha$ was arbitrary.

\item Let $\tau\in\mathcal{T}_{x,y}$ be an arbitrary stopping time. By Lemma \ref{Lem:MRT} there exists a control $\alpha\in\mathcal{A}$ such that
\[\tau = \inf\left\{t\geq 0\mid y-t+\int_0^t\alpha(s)dW(s)=0\right\}\hspace*{0.5cm}\mathbb{P}^x\text{-a.s.}\]
But $\tau=\tau^\alpha$ almost surely if we let $Y^\alpha$ evolve via the control $\alpha$. Then
\[\tilde{w}(x,y)=\sup\limits_{\tau\in\mathcal{T}_{x,y}}\mathbb{E}^x\left[f(W(\tau))\right]\geq w(x,y),\]
because $\tau$ was arbitrary.
\end{enumerate}
\end{proof}

Now we can immediately apply dynamic programming principle to $w$ and show that it is a viscosity solution to an HJB equation.

\begin{cor}\label{Cor:ViscositySolution}
Assume $w$ is locally bounded. Then it is a viscosity solution of the HJB
\begin{equation}\label{Eqn:AuxiliaryHJB}
\left\{\begin{array}{rl}
u_y - \sup\limits_{\alpha\in\mathbb{R}}\left[\frac{1}{2}u_{xx}+\alpha u_{xy}+\frac{1}{2}\alpha^2 u_{yy}\right] = 0 & \text{in }\mathbb{R}\times(0,\infty)\\
u = f & \text{on }\mathbb{R}\times\{y=0\}.
\end{array}\right.\end{equation}\end{cor}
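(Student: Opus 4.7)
The plan is to apply the standard dynamic programming plus Ito's formula technique. The key preliminary step is to establish the dynamic programming principle (DPP) for $w$: for each $(x_0, y_0) \in \mathbb{R} \times (0, \infty)$ and any stopping time $\theta$ with $\theta \leq \tau^\alpha$ almost surely,
\[
w(x_0, y_0) \;=\; \sup_{\alpha \in \mathcal{A}} \mathbb{E}^{x_0, y_0}\!\left[w(W(\theta), Y^\alpha(\theta))\right].
\]
This is essentially a consequence of the Markovian nature of the joint process $(W, Y^\alpha)$ together with the equivalence in Theorem \ref{Thm:Equivalence} and a standard measurable-selection argument. The boundary condition $w(x,0) = f(x)$ is immediate, since $y_0 = 0$ forces $\tau^\alpha = 0$ for every admissible $\alpha$.

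For the supersolution inequality, fix a test function $\varphi \in C^2$ and a local minimum $(x_0, y_0)$ of $w - \varphi$ with $y_0 > 0$, normalized so that $w(x_0, y_0) = \varphi(x_0, y_0)$. For any constant control $\bar\alpha \in \mathbb{R}$, apply the DPP with stopping time $\theta_h = h \wedge \rho_r$, where $\rho_r$ is the first exit time of $(W, Y^{\bar\alpha})$ from a small ball around $(x_0, y_0)$. Using $\varphi \leq w$ nearby and Ito's formula, the direction $w(x_0, y_0) \geq \mathbb{E}[w(W(\theta_h), Y^{\bar\alpha}(\theta_h))]$ of the DPP becomes
\[
0 \;\geq\; \mathbb{E}^{x_0, y_0}\!\left[\int_0^{\theta_h} \left(-\varphi_y + \tfrac{1}{2}\varphi_{xx} + \bar\alpha \varphi_{xy} + \tfrac{1}{2}\bar\alpha^2 \varphi_{yy}\right)\!(W(s), Y^{\bar\alpha}(s))\, ds\right].
\]
Dividing by $\mathbb{E}[\theta_h]$ and sending $h \to 0$ yields $\varphi_y(x_0, y_0) \geq \tfrac{1}{2}\varphi_{xx} + \bar\alpha \varphi_{xy} + \tfrac{1}{2}\bar\alpha^2 \varphi_{yy}$ at $(x_0, y_0)$; taking the supremum over $\bar\alpha$ is the supersolution inequality.

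For the subsolution inequality, fix a local maximum $(x_0, y_0)$ of $w - \varphi$ with $y_0 > 0$ and $w(x_0, y_0) = \varphi(x_0, y_0)$. Pick $\epsilon$-optimal controls $\alpha^\epsilon$ with $\epsilon = h^2$, so that the other DPP direction gives $w(x_0, y_0) \leq \mathbb{E}[w(W(\theta_h), Y^{\alpha^\epsilon}(\theta_h))] + h^2$. Using $w \leq \varphi$ near the contact point together with the pointwise bound $\tfrac{1}{2}\varphi_{xx} + \alpha \varphi_{xy} + \tfrac{1}{2}\alpha^2 \varphi_{yy} \leq \sup_\beta[\,\cdots\,]$ inside Ito's formula yields, after dividing by $\mathbb{E}[\theta_h]$ and sending $h \to 0$,
\[
\varphi_y(x_0, y_0) \;\leq\; \sup_{\alpha \in \mathbb{R}} \left[\tfrac{1}{2}\varphi_{xx} + \alpha \varphi_{xy} + \tfrac{1}{2}\alpha^2 \varphi_{yy}\right]\!(x_0, y_0),
\]
which is the subsolution inequality. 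Observe it is vacuously true whenever the right-hand Hamiltonian is $+\infty$, i.e., when $\varphi_{yy}(x_0, y_0) > 0$, or when $\varphi_{yy}(x_0, y_0) = 0$ with $\varphi_{xy}(x_0, y_0) \neq 0$.

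The main obstacle is the rigorous justification of the DPP. Although standard, it requires some care because the admissible controls $\mathcal{A}$ are unbounded and the dynamics live on a half-space with a degenerate stopping rule along $\{y = 0\}$; a standard truncation argument combined with Theorem \ref{Thm:Equivalence} should suffice. A secondary subtlety is the unboundedness of the Hamiltonian in $\alpha$: the supersolution argument produces an inequality valid for every $\bar\alpha \in \mathbb{R}$, which implicitly forces $\varphi_{yy}(x_0, y_0) \leq 0$ (and $\varphi_{xy}(x_0, y_0) = 0$ when $\varphi_{yy}(x_0, y_0) = 0$) at every test point touching $w$ from below. This foreshadows the concavity of $w$ in $y$ established later and motivates the growth restrictions imposed in Theorem \ref{Thm:UniqueHJB}.
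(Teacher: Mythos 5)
Your proposal is correct and is essentially the argument the paper has in mind: the paper simply invokes the ``standard procedure'' (dynamic programming principle plus It\^o/test-function arguments, citing \cite{Touzi2013}) once local boundedness and the control-problem representation of Theorem~\ref{Thm:Equivalence} are in hand, and you have written out exactly that verification, including the correct boundary observation that $y_0=0$ forces $\tau^\alpha=0$. Your closing remark on the possibly infinite Hamiltonian matches the paper's own caveat, which it resolves by adopting the viscosity-solution framework of Definition~2.1 in \cite{DaLioLey2010}.
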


\begin{remark}
Special care must be taken when interpreting viscosity solutions to \eqref{Eqn:AuxiliaryHJB} because the unbounded diffusion coefficient may cause the Hamiltonian to be infinite. In this special case, the definition of a viscosity solution must be modified slightly. In this paper, we follow the framework provided in Definition 2.1 of \cite{DaLioLey2010}. Our later comparison results will also be proved from this perspective.
\end{remark}

\begin{remark}
An immediate question the reader may have is whether these infinite-horizon results extend to a finite-horizon problem. The key technical difficulty lies in determining a finite-horizon analogue of Lemma~\ref{Lem:MRT} and Theorem~\ref{Thm:Equivalence}.

If we constrain a stopping time $\tau\in\mathcal{T}_{x,y}$ to also satisfy $\tau\leq T$, almost-surely, then the argument in Lemma~\ref{Lem:MRT} still holds. However, given a control $\alpha\in\mathcal{A}$, the corresponding stopping time $\tau^\alpha$ will not satisfy $\tau^\alpha \leq T$, almost surely. Instead, the proper analogy is to restrict the collection of admissible controls $\mathcal{A}$ to only those which satisfy
\[Y^\alpha(t) \not\in (0,T-t) \implies \alpha(t) = 0.\]
The idea is that if $Y^\alpha(t) > T-t$, then the almost-sure and expectation constraints cannot be simultaneously satisfied. Therefore, we must exclude controls which allow this case. One can then see that an analogue of Lemma~\ref{Lem:MRT} holds for this subcollection of controls. The resulting analogue of HJB \eqref{Eqn:AuxiliaryHJB} will involve an extra state-variable and default to $\alpha\equiv 0$ in certain regions of state-space.
\end{remark}

\subsection{Construction of Optimal Stopping Times}\label{Subsection:Construction}

We have shown that we can recover the value function of the time-inconsistent optimal stopping problem by performing sequential optimization over choice of $y\geq 0$ and control $\alpha\in\mathcal{A}$. However, it remains to be shown that we can construct an optimal stopping time for the time-inconsistent problem.

The goal of this section is to relate the construction of optimal stopping times (or more generally, $\epsilon$-suboptimal stopping times) to the analogous problem of constructing optimal controls (or $\epsilon$-suboptimal controls). We start by recording a connection between $\epsilon$-optimal solutions of \eqref{Eqn:SeqOptimization} to $(2\epsilon)$-optimal solutions of \eqref{Equation:MainProblem}.

\begin{thm}\label{Thm:StoppingTimeConstruction}
Fix $x\in\mathbb{R}$ and $\epsilon > 0$. Let $y\geq 0$ be $\epsilon$-suboptimal for the optimization problem (\ref{Eqn:SeqOptimization}) and $\alpha\in\mathcal{A}$ be $\epsilon$-suboptimal for $w(x,y)$. Consider the stopping time
\[\tau^\alpha := \inf\left\{t\geq 0\mid y-t+\int_0^t\alpha(s)dW(s) = 0\right\}.\]
Then $\tau^\alpha\in\mathcal{T}$ is $(2\epsilon)$-suboptimal for the time-inconsistent problem \eqref{Equation:MainProblem}.\end{thm}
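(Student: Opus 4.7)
The plan is to chain together the two $\epsilon$-suboptimalities, using the fact that the stopping time $\tau^\alpha$ built from the control lands exactly in the class $\mathcal{T}_{x,y}$, so the non-linear term $g(\mathbb{E}^x[\tau^\alpha])$ collapses to $g(y)$ and no extra error is incurred by the non-linearity.

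First I would invoke Lemma~\ref{Lem:MRT}, applied to $\alpha \in \mathcal{A}$, to conclude that $\tau^\alpha \in \mathcal{T}_{x,y} \subset \mathcal{T}$, and in particular $\mathbb{E}^x[\tau^\alpha] = y$. This is the crucial structural ingredient, since it pins down the argument of $g$ in the original objective. Consequently,
\begin{equation}\nonumber
J[x,\tau^\alpha] \;=\; \mathbb{E}^x\bigl[f(W(\tau^\alpha))\bigr] + g\bigl(\mathbb{E}^x[\tau^\alpha]\bigr) \;=\; \mathbb{E}^x\bigl[f(W(\tau^\alpha))\bigr] + g(y).
\end{equation}

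Next I would use the two hypotheses in sequence. By $\epsilon$-suboptimality of $\alpha$ for $w(x,y)$, together with Theorem~\ref{Thm:Equivalence} which identifies $w(x,y) = \tilde{w}(x,y)$,
\begin{equation}\nonumber
\mathbb{E}^x\bigl[f(W(\tau^\alpha))\bigr] \;\geq\; w(x,y) - \epsilon.
\end{equation}
By $\epsilon$-suboptimality of $y$ for the outer problem \eqref{Eqn:SeqOptimization},
\begin{equation}\nonumber
w(x,y) + g(y) \;\geq\; v(x) - \epsilon.
\end{equation}
Adding $g(y)$ to the first inequality and substituting the second yields
\begin{equation}\nonumber
J[x,\tau^\alpha] \;\geq\; w(x,y) + g(y) - \epsilon \;\geq\; v(x) - 2\epsilon,
\end{equation}
which is exactly the claimed $(2\epsilon)$-suboptimality.

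There is no real obstacle here; the theorem is essentially a bookkeeping corollary once Lemma~\ref{Lem:MRT} and Theorem~\ref{Thm:Equivalence} are in place. The only subtle point worth emphasizing in the writeup is why the non-linear $g$ does not destroy the error accounting: had $\tau^\alpha$ merely satisfied $\mathbb{E}^x[\tau^\alpha] \approx y$ rather than $=y$, one would need continuity of $g$ and a quantitative bound on the discrepancy, introducing an additional error term. The exact identity $\mathbb{E}^x[\tau^\alpha] = y$ guaranteed by the martingale representation construction is precisely what keeps the total suboptimality at $2\epsilon$ rather than $2\epsilon + o(1)$.
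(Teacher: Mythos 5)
Your proposal is correct and follows essentially the same route as the paper's proof: invoke Lemma~\ref{Lem:MRT} to place $\tau^\alpha$ in $\mathcal{T}_{x,y}$ (so that $g(\mathbb{E}^x[\tau^\alpha])=g(y)$), then chain the two $\epsilon$-suboptimality inequalities to get the $2\epsilon$ bound. The only difference is cosmetic --- you write the chain as a lower bound on $J[x,\tau^\alpha]$ while the paper writes it as an upper bound on $v(x)$.
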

\begin{proof}
Fix $\epsilon >0$ and let $y\geq 0$ be $\epsilon$-suboptimal for problem (\ref{Eqn:SeqOptimization}). That is,
\[v(x)\leq w(x,y)+g(y)+\epsilon.\]
Furthermore, let $\alpha\in\mathcal{A}$ be $\epsilon$-suboptimal for $w(x,y)$. That is,
\[w(x,y)\leq\mathbb{E}^{x,y}\left[f(W(\tau^\alpha))\right]+\epsilon.\]
Recall from Lemma~\ref{Lem:MRT} that the stopping time $\tau^\alpha \in \mathcal{T}_{x,y}$ is admissible. Then, putting the two inequalities above together, we find
\begin{eqnarray}
v(x)& \leq & \mathbb{E}^x\left[f(W(\tau^\alpha))\right]+g(y)+2\epsilon \nonumber\\
& =&  \mathbb{E}^x\left[f(W(\tau^\alpha))\right]+g(\mathbb{E}^x\left[\tau^\alpha\right]) + 2\epsilon\nonumber\\
& = & J\left[x,\tau^\alpha\right] + 2\epsilon.\nonumber
\end{eqnarray}
Then $\tau^\alpha$ is $\left(2\epsilon\right)$-suboptimal for the time-inconsistent problem.
\end{proof}

There are several implications of this result.
\begin{itemize}
\item First, this result shows that if the sequential problem has a maximizing element, then it corresponds to an optimal stopping time for the time-inconsistent stopping problem. We demonstrate conditions for the outer-optimization problem to have a maximizing element in Section \ref{Section:Concavity}. Then the existence of an optimal control in the inner problem generally can be related to the existence of an optimal control for the auxiliary control problem. For instance, if the solution is well-behaved enough to construct an optimal Markov control, then it corresponds to an optimal stopping time through this result.

\item Second, this result demonstrates how, in the potential absence of an optimal stopping time for \eqref{Equation:MainProblem}, we can construct $(2\epsilon)$-suboptimal stopping times from $\epsilon$-suboptimal controls. Some care should be taken here because in general the resulting stopping times can, in principle, be heavily path-dependent at this point. Regardless, the problem of construction of $\epsilon$-suboptimal Markov controls is well-studied. We refer to the reader worried about practical construction of $\epsilon$-suboptimal Markov controls to \cite{Krylov2009}, for example.
\end{itemize}

\section{Viscosity Characterization and Concavity Properties}\label{Section:Properties}

In this section we prove various properties of the auxiliary value function $w$. In particular, we characterize it as the unique viscosity solution of (\ref{Eqn:AuxiliaryHJB}) with certain growth conditions. Furthermore, we highlight conditions under which there is concavity in the time-direction for each fixed value of $x\in\mathbb{R}$. Finally, we use this to show that the sequential optimization problem is a concave optimization problem.

In the following sections, we impose an asymptotic growth assumption.
\begin{assumption}\label{Assumption:Growth}
There exists $\beta,\gamma\geq 0$ such that $|f(x)+\beta x^2|\leq\gamma(1+|x|)$ for all $x\in\mathbb{R}$.
\end{assumption}
While this growth assumption may seem non-intuitive, it is chosen because we can demonstrate that the auxiliary value function $w$ decomposes into the sum of a fixed quadratic component and a function which is Lipschitz continuous in $(x,y)$. Because (\ref{Eqn:AuxiliaryHJB}) has an unbounded diffusion coefficient, it is not known to have a unique viscosity solution among functions with super-linear asymptotic growth (see \cite{DaLioLey2010}). However, relying on the decomposition above, we can demonstrate a form of uniqueness. If we allow more general growth (e.g. higher-order polynomials) then the residual component of $w$ is no longer Lipschitz in $y$ and our arguments no longer hold.

\subsection{Viscosity Solution Characterization}\label{Subsection:Viscosity}

Our goal in this section is to characterize the auxiliary value function $w$ as the unique viscosity solution of (\ref{Eqn:AuxiliaryHJB}) with a particular type of growth at infinity. We proceed by first obtaining growth bounds on the auxiliary value function.

\begin{lem}\label{Lem:AuxiliaryValueBounds}
Suppose that Assumption~\ref{Assumption:Growth} holds. Then there exists $C\geq 0$ such that
\[|w(x,y)-f(x)|\leq C(1+|x|+|y|)\hspace{0.5cm}\text{for all }(x,y)\in\mathbb{R}\times[0,\infty).\]\end{lem}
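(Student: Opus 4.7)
The plan is to exploit the equivalence $w=\tilde w$ established in Theorem~\ref{Thm:Equivalence} together with the key observation that for every $\tau\in\mathcal{T}_{x,y}$, the second moment of $W(\tau)$ is explicitly determined by the constraint $\mathbb{E}^x[\tau]=y$. Indeed, since $\tau$ has finite second moment, optional stopping applied to the martingale $W_t^2-t$ (stopped at bounded times $\tau\wedge n$ and passed to the limit via dominated convergence, which is justified by Doob's $L^2$-maximal inequality and $\mathbb{E}^x[\tau^2]<\infty$) yields
\[\mathbb{E}^x\!\left[W(\tau)^2\right] \;=\; x^2 + \mathbb{E}^x[\tau] \;=\; x^2+y.\]
This identity is what makes the quadratic normalization in Assumption~\ref{Assumption:Growth} the ``right'' one.

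The main step is then to introduce the auxiliary payoff $\tilde{f}(z):=f(z)+\beta z^2$, which by Assumption~\ref{Assumption:Growth} satisfies $|\tilde{f}(z)|\leq\gamma(1+|z|)$. Using the identity above and linearity of expectation, for every $\tau\in\mathcal{T}_{x,y}$,
\[\mathbb{E}^x\!\left[f(W(\tau))\right] \;=\; \mathbb{E}^x\!\left[\tilde{f}(W(\tau))\right] - \beta(x^2+y),\]
so taking the supremum over $\tau\in\mathcal{T}_{x,y}$ (which is nonempty, e.g.\ $\tau\equiv y$) and invoking Theorem~\ref{Thm:Equivalence} gives
\[w(x,y) \;=\; \sup_{\tau\in\mathcal{T}_{x,y}} \mathbb{E}^x\!\left[\tilde{f}(W(\tau))\right] \;-\; \beta(x^2+y).\]
Bounding the bracketed supremum is now straightforward: for every admissible $\tau$, Cauchy--Schwarz together with the identity above yields $\mathbb{E}^x[|W(\tau)|]\leq\sqrt{x^2+y}$, so
\[\bigl|\mathbb{E}^x[\tilde{f}(W(\tau))]\bigr| \;\leq\; \gamma\bigl(1+\sqrt{x^2+y}\bigr).\]

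Finally, I would combine this with the pointwise bound $|f(x)+\beta x^2|\leq\gamma(1+|x|)$ to estimate $w(x,y)-f(x)$ directly by writing
\[w(x,y)-f(x) \;=\; \bigl[w(x,y)+\beta(x^2+y)\bigr] \;-\; \bigl[f(x)+\beta x^2\bigr] \;-\; \beta y,\]
and bounding the two bracketed terms by $\gamma(1+\sqrt{x^2+y})$ and $\gamma(1+|x|)$, respectively. The only subtlety is that Cauchy--Schwarz produces a $\sqrt{y}$ term rather than a linear-in-$y$ term, but the elementary inequalities $\sqrt{x^2+y}\leq|x|+\sqrt{y}$ and $\sqrt{y}\leq\tfrac{1}{2}(1+y)$ absorb this into linear growth, yielding a constant $C$ depending only on $\beta$ and $\gamma$ such that $|w(x,y)-f(x)|\leq C(1+|x|+|y|)$.

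The only genuine obstacle is the justification of optional stopping for $W_t^2-t$ at the unbounded stopping time $\tau$; this is precisely where the $\mathcal{T}$-restriction to finite-variance stopping times pays off, and is the entire purpose of that restriction. Everything else is routine manipulation of the growth bounds.
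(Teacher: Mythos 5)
Your argument is correct, and it reaches the same conclusion by a slightly different and arguably cleaner route than the paper. Both proofs hinge on the same structural fact: the constraint $\mathbb{E}^x[\tau]=y$ pins down the second moment of $W(\tau)$, so the quadratic normalization in Assumption~\ref{Assumption:Growth} turns $w$ into a fixed quadratic plus a linearly growing residual. The paper executes this with two one-sided estimates: an upper bound for arbitrary $\tau\in\mathcal{T}_{x,y}$ using $f(z)\leq\gamma(1+|z|)-\beta z^2$, Tanaka's formula to control $\mathbb{E}^x[|W(\tau)|]$ via $\mathbb{E}^x[\sqrt{\tau}]\leq\sqrt{y}$, and the elementary bound $\gamma\sqrt{y}\leq y+\gamma^2/4$; and a lower bound from the single deterministic choice $\tau\equiv y$. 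You instead make the decomposition explicit: using Wald's second identity $\mathbb{E}^x[W(\tau)^2]=x^2+y$ (which you justify carefully via optional stopping at $\tau\wedge n$), you write $w(x,y)+\beta(x^2+y)=\sup_{\tau}\mathbb{E}^x[\tilde f(W(\tau))]$ with $\tilde f=f+\beta(\cdot)^2$ of linear growth, bound $\mathbb{E}^x[|W(\tau)|]\leq\sqrt{x^2+y}$ by Cauchy--Schwarz rather than Tanaka, and get the two-sided bound on the supremum from the termwise bound plus nonemptiness of $\mathcal{T}_{x,y}$ (the role played by $\tau\equiv y$ in the paper). Your version avoids local-time arguments and displays exactly the ``quadratic plus Lipschitz-type residual'' structure the paper only alludes to after Assumption~\ref{Assumption:Growth}, at the cost of spending a few lines justifying the Wald identity that the paper uses silently. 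One small correction: the optional-stopping step only needs $\mathbb{E}^x[\tau]<\infty$ (via $\mathbb{E}[W^2_{\tau\wedge n}]=x^2+\mathbb{E}[\tau\wedge n]$ and Doob's maximal inequality), so it is not the raison d'\^etre of the finite-variance restriction on $\mathcal{T}$; that restriction is really needed for the martingale representation argument in Lemma~\ref{Lem:MRT}, not for this lemma.
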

\begin{proof}
Fix $(x,y)\in\mathbb{R}\times[0,\infty)$ and let $\tau\in\mathcal{T}_y$ be an arbitrary stopping time. Then
\begin{eqnarray}
\mathbb{E}^x\left[f(W(\tau))\right] & \leq & \gamma+\gamma\mathbb{E}^x\left[|W(\tau)|\right]-\beta\mathbb{E}^x\left[W(\tau)^2\right]\nonumber\\
& \leq & \gamma+\gamma|x|-\beta x^2+\gamma\mathbb{E}^x\left[\sqrt{\tau}\right]-\beta\mathbb{E}^x\left[\tau\right]\nonumber\\
& \leq & \gamma(1+|x|)-\beta x^2+ (1-\beta)y+\frac{\gamma^2}{4}.\nonumber
\end{eqnarray}
Note, we used Tanaka's formula (see \cite{KaratzasShreve1991}) to bound the term $\mathbb{E}^x\left[|W(\tau)|\right]$. Because $\tau$ was arbitrary
\[w(x,y)\leq\gamma(1+|x|)-\beta x^2+(1-\beta)y+\frac{\gamma^2}{4}\leq f(x)+2\gamma(1+|x|)+(1-\beta) y+\frac{\gamma^2}{4}.\]
On the other hand, if we choose $\tau\equiv y$, then we compute
\begin{eqnarray}
w(x,y) & \geq & \mathbb{E}^x\left[f(W(y))\right]\nonumber\\
& \geq & -\gamma-\gamma\mathbb{E}^x\left[|W(y)|\right]-\beta\mathbb{E}^x\left[W(y)^2\right]\nonumber\\
& \geq & -\gamma-\gamma|x|-\beta x^2-\gamma\sqrt{y}-\beta y\nonumber\\
& \geq & f(x)-2\gamma(1+|x|)-(1+\beta) y-\frac{\gamma^2}{4}.\nonumber
\end{eqnarray}
Then we conclude
\[|w(x,y)-f(x)|\leq 2\gamma(1+|x|)+(1+\beta)y+\frac{\gamma^2}{4}\]
for all $(x,y)\in\mathbb{R}\times[0,\infty)$. Taking $C:=1+2\gamma+\beta+\gamma^2/4$, the result follows.
\end{proof}

The growth bounds above immediately show that the auxiliary value function $w$ is locally bounded. Then it is a standard procedure to check that $w$ is a viscosity solution of (\ref{Eqn:AuxiliaryHJB}), see \cite{Touzi2013}.

The question of uniqueness is a bit more subtle because this is a degenerate elliptic equation with no upper bound on the diffusion coefficient. Furthermore, we expect $w$ to have quadratic growth. Proving a standard comparison principle with quadratic growth and unbounded diffusion coefficient can be very technical (see \cite{DaLioLey2010}). Instead, we focus on the fact that we know that $w-f$ has linear growth.

\begin{lem}[Comparison Principle]\label{Lem:ComparisonPrinciple}
Let $u$ be an upper semi-continuous viscosity sub-solution and $v$ be a lower semi-continuous viscosity super-solution of (\ref{Eqn:AuxiliaryHJB}) respectively. Suppose that their difference has linear growth. That is, there exists $C\geq 0$ such that
\[|u(x,y)-v(x,y)|\leq C(1+|x|+|y|).\]
Then if $u\leq v$ on $\mathbb{R}\times\{y=0\}$, then $u\leq v$ on $\mathbb{R}\times [0,\infty)$.\end{lem}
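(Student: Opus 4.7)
My strategy is the standard doubling-of-variables comparison argument, tuned for two features specific to (\ref{Eqn:AuxiliaryHJB}): the control $\alpha$ is unbounded, which renders the Hamiltonian equal to $-\infty$ on matrices with strictly positive $yy$-entry; and the domain is unbounded. The linear-growth hypothesis on $u-v$ is exactly what lets us localize by a function whose second derivatives are globally small, so that the standard matrix-inequality estimates survive.

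Assuming for contradiction that $\sup_{\mathbb{R}\times[0,\infty)}(u-v) > 0$, I would introduce, for parameters $\epsilon,\delta > 0$,
\[
\Phi_{\epsilon,\delta}(x,y,x',y') := u(x,y) - v(x',y') - \tfrac{1}{2\epsilon}\bigl(|x-x'|^2 + |y-y'|^2\bigr) - \delta\bigl(\chi(x,y)+\chi(x',y')\bigr),
\]
with $\chi(x,y):=(1+x^2+y^2)^{(1+\eta)/2}$ for a small fixed $\eta\in(0,1)$. Since $u-v$ grows at most linearly while $\chi$ grows strictly faster, $\Phi_{\epsilon,\delta}$ attains a finite maximum $M_{\epsilon,\delta}$ at some $(\hat x, \hat y, \hat x', \hat y')$, which remains strictly positive for $\delta$ small. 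Standard penalization arguments show $|\hat x - \hat x'|^2 + |\hat y - \hat y'|^2 = o(1)$ as $\epsilon\to 0$, and the boundary hypothesis $u \le v$ on $\{y=0\}$ combined with semicontinuity rules out the boundary as the location of the maximum for small $\epsilon, \delta$.

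I would then apply the Crandall-Ishii lemma at the interior maximum to produce symmetric matrices $X,Y \in \mathbb{S}^{2}$ and corresponding sub-jet/super-jet elements, satisfying the matrix inequality
\[
\begin{pmatrix} X & 0 \\ 0 & -Y \end{pmatrix} \;\leq\; \tfrac{3}{\epsilon} B \;+\; \delta\,\bigl(D^2\chi(\hat x,\hat y) \oplus D^2\chi(\hat x',\hat y')\bigr),
\]
where $B$ is the Hessian of the quadratic doubling penalty. Testing on diagonal vectors $(\xi,\xi)^\top$ yields $\xi^\top(X-Y)\xi = O(\delta)$ for every $\xi\in\mathbb{R}^2$, using that the maximizers lie in a bounded set so $D^2\chi$ stays bounded there. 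Following the modified viscosity definition of \cite{DaLioLey2010}, I would write the sub- and super-solution inequalities at the maximizer in a form valid despite the possibility of infinite Hamiltonian, subtract them, and exploit that it is the difference $\sup_\alpha[\alpha X_{xy} + \tfrac{1}{2}\alpha^2 X_{yy}] - \sup_\alpha[\alpha Y_{xy} + \tfrac{1}{2}\alpha^2 Y_{yy}]$ that the matrix inequality controls. After the $1/\epsilon$ contributions cancel, one is left with an estimate of the form $\delta\cdot(\text{bounded}) \leq O(\delta)$ which, upon sending $\epsilon\to 0$ and then $\delta\to 0$, contradicts the strict positivity of $M_{\epsilon,\delta}$.

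The principal obstacle is the unbounded $\alpha$: the doubling penalty forces the $yy$-component of $X$ to be of order $1/\epsilon > 0$, so the naive sub-solution inequality is trivially satisfied ($\sup_\alpha[\cdot] = +\infty$) and gives no information. Bypassing this is precisely the point of invoking the \cite{DaLioLey2010} framework, which allows one to estimate the difference of Hamiltonians jointly using the matrix inequality rather than evaluating either Hamiltonian separately on matrices with positive $yy$-entry. A secondary technical point is verifying that $(\hat x, \hat y), (\hat x', \hat y')$ remain in a bounded region as $\delta \to 0$; this holds because $\delta\chi$ outpaces the linear growth of $u-v$ on any sequence escaping to infinity, so the maximum cannot wander off.
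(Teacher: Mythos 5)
Your overall scaffolding (doubling of variables, localization adapted to linear growth of $u-v$, Crandall--Ishii, and exploiting the matrix inequality on diagonal vectors so the $1/\epsilon$ terms cancel, i.e.\ the choice $\beta=\alpha$) matches the paper's appendix proof, but the sketch has a genuine gap at the point where the contradiction is supposed to appear. The operator in \eqref{Eqn:AuxiliaryHJB} has no zeroth-order monotone term, so strictness must be manufactured. The paper does this with a penalty that is \emph{linear} in the $y$-variables, $\lambda(t+s)$ with $\lambda$ larger than the linear-growth constant: this term simultaneously makes $\Phi$ coercive in the $y$-direction and, through the $u_y$ term of the PDE, puts a fixed margin $2\lambda$ on the left of the subtracted sub/supersolution inequalities; the final contradiction is $2\lambda\le 6\epsilon_n+2\epsilon_n^4$ with $\lambda>0$ fixed. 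In your version the only first-order contribution in the $y$-direction is $\delta\,\partial_y\chi$, which is merely nonnegative and itself of size $O(\delta)$, so your closing estimate ``$\delta\cdot(\text{bounded})\le O(\delta)$'' is not a contradiction of anything --- and the strict positivity of $M_{\epsilon,\delta}$, which you invoke, never actually enters that inequality.

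The second, related problem is your choice of localizer $\chi(x,y)=(1+x^2+y^2)^{(1+\eta)/2}$: it penalizes the $y$-direction at second order, so $D^2\chi$ has a strictly positive $yy$-entry at the maximizers. When the matrix inequality is tested on $(1,\alpha,1,\alpha)$ --- which is what controlling the difference of the two suprema amounts to --- the error contributed by $\delta\bigl(D^2\chi(\hat x,\hat y)\oplus D^2\chi(\hat x',\hat y')\bigr)$ is of order $\delta(1+\alpha^2)$, not $O(\delta)$, and the near-maximizing $\alpha$ on the subsolution side is not a priori bounded (the sub-jet matrix produced by the doubling typically has nonnegative $yy$-entry, so the relevant $\alpha$ can be arbitrarily large). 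This is exactly where the unbounded control bites, and claiming that the framework of \cite{DaLioLey2010} ``controls the difference jointly'' does not remove the $\alpha^2$-dependence of the error. The paper avoids it by localizing quadratically only in $x$ and linearly in $y$: the extra Hessian is $\epsilon_n\,\mathrm{diag}(1,0,1,0)$, whose quadratic form on $(1,\alpha,1,\alpha)$ equals $2\epsilon_n$ independently of $\alpha$. Replacing your $\delta\chi$ by a penalty of the form $\tfrac{\epsilon}{2}(x^2+x'^2)+\lambda(y+y')$ with $\lambda>C$ repairs both defects and recovers the paper's argument. A minor further point: the Crandall--Ishii inequality yields only the one-sided bound $\xi^\top(X-Y)\xi\le O(\cdot)$, not the equality you wrote.
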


This result is fairly standard, but is complicated by the unbounded diffusion coefficient. We provide a complete proof of Lemma \ref{Lem:ComparisonPrinciple} in the Appendix.

\begin{thm}\label{Thm:UniqueHJB}
Suppose that Assumption~\ref{Assumption:Growth} holds. Then the auxiliary value function $w$ is the unique viscosity solution of (\ref{Eqn:AuxiliaryHJB}) such that $w(x,y)-f(x)$ has linear growth.\end{thm}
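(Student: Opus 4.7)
The plan is to split the claim into existence and uniqueness, both of which follow almost immediately from results already proved.

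For existence, Lemma~\ref{Lem:AuxiliaryValueBounds} gives $|w(x,y)-f(x)|\leq C(1+|x|+|y|)$, which in particular shows $w$ is locally bounded and that the residual $w-f$ has linear growth, so the growth requirement in the theorem is satisfied. Local boundedness lets us invoke Corollary~\ref{Cor:ViscositySolution} to conclude that $w$ is a viscosity solution of (\ref{Eqn:AuxiliaryHJB}) in $\mathbb{R}\times(0,\infty)$. The boundary condition $w(x,0)=f(x)$ is immediate from Definition~\ref{Defn:AuxProblem}: when $y=0$, every admissible control produces $\tau^\alpha=0$ almost surely, so $w(x,0)=\mathbb{E}^{x,0}[f(W(0))]=f(x)$.

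For uniqueness, let $u$ be any other viscosity solution of (\ref{Eqn:AuxiliaryHJB}) for which $u(x,y)-f(x)$ has linear growth. Then on $\{y=0\}$ both $u$ and $w$ equal $f$, and the difference
\[
u(x,y)-w(x,y) = \bigl(u(x,y)-f(x)\bigr)-\bigl(w(x,y)-f(x)\bigr)
\]
inherits linear growth from its two summands. Applying Lemma~\ref{Lem:ComparisonPrinciple} with $u$ as the upper semi-continuous subsolution and $w$ as the lower semi-continuous supersolution (a viscosity solution is of course both) yields $u\leq w$ on $\mathbb{R}\times[0,\infty)$; swapping the roles of $u$ and $w$ gives the reverse inequality. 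Hence $u=w$ and uniqueness is established.

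The only issue requiring care is strictly a bookkeeping one: the comparison principle in Lemma~\ref{Lem:ComparisonPrinciple} is stated in terms of linear growth of the \emph{difference} $u-v$ rather than of each function individually, which is essential here since each of $u$ and $w$ itself may exhibit quadratic growth (inherited from $f$ under Assumption~\ref{Assumption:Growth}). I do not expect any genuine obstacle beyond verifying this matching of hypotheses; all the real analytic work has been absorbed into Lemmas~\ref{Lem:AuxiliaryValueBounds} and~\ref{Lem:ComparisonPrinciple} and Corollary~\ref{Cor:ViscositySolution}.
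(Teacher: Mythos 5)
Your proposal is correct and follows essentially the same route as the paper: $w$ is a solution with the stated growth by Lemma~\ref{Lem:AuxiliaryValueBounds} and Corollary~\ref{Cor:ViscositySolution}, and for any other solution $u$ with $u-f$ of linear growth the triangle inequality through $f$ gives linear growth of $u-w$, so Lemma~\ref{Lem:ComparisonPrinciple} applied in both directions (with equality on $\{y=0\}$) yields $u=w$. Your closing observation about the comparison principle being phrased in terms of the difference is precisely the point the paper emphasizes when motivating Assumption~\ref{Assumption:Growth}.
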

\begin{proof}
We showed in Lemma \ref{Lem:AuxiliaryValueBounds} that $w$ is a viscosity solution satisfying the growth condition above. Suppose that $v$ is another viscosity solution such that $v-f$ has linear growth. Then there exists $C\geq 0$ such that
\[|w(x,y)-v(x,y)|\leq |w(x,y)-f(x)|+|v(x,y)-f(x)|\leq C(1+|x|+|y|).\]
Furthermore, $v=w$ on $\mathbb{R}\times\{y=0\}$, so it follows from Lemma \ref{Lem:ComparisonPrinciple} that $v=w$ on $\mathbb{R}\times[0,\infty)$.
\end{proof}

It is worth noting that if $f$ is sufficiently smooth, we can instead characterize $\tilde{w}:=w-f$ as the unique linear growth viscosity solution of a related HJB. This approach would avoid many of the technical difficulties above, but may require more regularity of $f$ than is available in practice.

\subsection{Concavity Properties}\label{Section:Concavity}

We propose conditions under which the auxiliary value function $w$ is concave in $y$ for each $x$. Note that because $w(x,0)=f(x)$, there is no hope of joint concavity of $w$ when $f$ is not concave. We also proceed to show conditions under which the sequential optimization problem for $v$ is concave.

\begin{prop}\label{Prop:ConcaveValueFunction}
Suppose that Assumption~\ref{Assumption:Growth} holds. Then for each $x\in\mathbb{R}$, the map $y\mapsto w(x,y)$ is concave.\end{prop}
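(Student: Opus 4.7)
The plan is to establish concavity directly from the control formulation $w=\tilde{w}$ (Theorem~\ref{Thm:Equivalence}) by building a single control that ``randomizes'' between near-optimal controls for the two endpoints via a large-volatility splitting phase. Fix $x\in\mathbb{R}$, $0\leq y_1 < y_2$, and $\lambda\in(0,1)$; set $y := \lambda y_1 + (1-\lambda) y_2$ (the case $y_1=y_2$ is trivial). Given $\epsilon>0$, select controls $\alpha_i\in\mathcal{A}$ such that $V_i(x) := \mathbb{E}^x[f(W(\tau^{\alpha_i}))] \geq w(x,y_i)-\epsilon$, where $\tau^{\alpha_i}$ is the hitting time of $0$ by the $\alpha_i$-driven process started at $y_i$.

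I construct a composite control $\alpha$ from initial data $(x,y)$ in two phases. For a parameter $M>0$, set $\alpha(t)=M$ on $[0,\tau_0^M]$, where $\tau_0^M:=\inf\{t\geq 0 : Y^\alpha(t)\in\{y_1,y_2\}\}$ and $dY^\alpha=-dt+\alpha\,dW$ with $Y^\alpha(0)=y$. The time-rescaling $Z(s):=Y^\alpha(s/M^2)$ is a Brownian motion with drift $-1/M^2$ starting at $y$, so as $M\to\infty$ standard results yield $\tau_0^M\to 0$ in $L^1$ and, by gambler's ruin, $\mathbb{P}(Y^\alpha(\tau_0^M)=y_1)\to (y_2-y)/(y_2-y_1)=\lambda$. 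After $\tau_0^M$, switch to the time-shifted copy of $\alpha_1$ or $\alpha_2$ according to whether $Y^\alpha(\tau_0^M)$ equals $y_1$ or $y_2$. The resulting control is progressively measurable and square-integrable, and by Lemma~\ref{Lem:MRT} the induced stopping time $\tau^\alpha$ automatically satisfies $\mathbb{E}^x[\tau^\alpha]=y$, placing it in $\mathcal{T}_{x,y}$.

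Conditioning on $\mathcal{F}_{\tau_0^M}$ and using the strong Markov property of $W$,
\begin{equation}\nonumber
\mathbb{E}^{x,y}[f(W(\tau^\alpha))] = \mathbb{E}\bigl[\mathbf{1}_{\{Y^\alpha(\tau_0^M)=y_1\}}\,V_1(W(\tau_0^M)) + \mathbf{1}_{\{Y^\alpha(\tau_0^M)=y_2\}}\,V_2(W(\tau_0^M))\bigr].
\end{equation}
Since $\tau_0^M\to 0$ forces $W(\tau_0^M)\to x$ while the indicator probabilities approach $\lambda$ and $1-\lambda$, passing to the limit and invoking the $\epsilon$-optimality of the $\alpha_i$ yields
\begin{equation}\nonumber
w(x,y) \geq \lambda V_1(x) + (1-\lambda) V_2(x) \geq \lambda w(x,y_1) + (1-\lambda) w(x,y_2) - \epsilon,
\end{equation}
and sending $\epsilon\to 0$ delivers the desired concavity.

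The main obstacle is justifying the passage to the limit inside the expectation. Continuity of $V_i$ at $x$ follows from continuity of $f$, the identity $\mathbb{E}^{x'}[W(\tau^{\alpha_i})^2]=(x')^2+y_i$ (It\^o isometry together with $\mathbb{E}[\tau^{\alpha_i}]=y_i$), and the quadratic domination supplied by Assumption~\ref{Assumption:Growth}. For uniform integrability of the random variables $V_i(W(\tau_0^M))$ across $M$, one uses the growth bound $|V_i(x')|\leq C(1+(x')^2)$ (from Assumption~\ref{Assumption:Growth} and Lemma~\ref{Lem:AuxiliaryValueBounds}) combined with the scaling-based bound $\mathbb{E}[\tau_0^M]=O(M^{-2})$ and $\mathbb{E}[(\tau_0^M)^2]=O(M^{-4})$, which keep all moments of $W(\tau_0^M)$ bounded uniformly in $M$. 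Vitali's theorem then closes the argument.
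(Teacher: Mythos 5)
Your argument is correct, but it takes a genuinely different route from the paper's. The paper obtains concavity from the PDE: since $w$ is a viscosity solution of \eqref{Eqn:AuxiliaryHJB}, any smooth test function touching it from below must satisfy $-\phi_{yy}\geq 0$ (otherwise the supremum over $\alpha$ is infinite), so each slice $y\mapsto w(x_0,y)$ is a viscosity supersolution of $-u''\geq 0$ and hence concave on $(0,\infty)$, with concavity up to $y=0$ supplied by the continuity coming from Lemma~\ref{Lem:AuxiliaryValueBounds}. You instead prove concavity directly at the level of the control problem: starting from $y=\lambda y_1+(1-\lambda)y_2$ you run a large-volatility phase until $Y$ exits $(y_1,y_2)$, which by gambler's ruin splits mass $\lambda$ and $1-\lambda$ in vanishing time, and then concatenate $\epsilon$-optimal controls for the two endpoints. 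Your route avoids the viscosity machinery entirely (no appeal to the supersolution-implies-concave facts cited from Touzi, nor to the modified notion of viscosity solution needed for the unbounded Hamiltonian) and exposes the probabilistic mechanism behind concavity --- the controller can randomize the conditional expected stopping time at negligible cost; the paper's route is shorter given the HJB characterization already established, and needs no exit-time scaling or uniform-integrability estimates.

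One technical point to tighten: continuity of $V_i(x')=\mathbb{E}^{x'}[f(W(\tau^{\alpha_i}))]$ at $x$ is not automatic for an arbitrary progressively measurable $\alpha_i$, because as a functional of the path $\alpha_i$ (and hence $\tau^{\alpha_i}$) may depend on the initial value $W(0)$ and not only on the increments; your dominated-convergence argument implicitly keeps $\tau^{\alpha_i}$ fixed while translating $W$. The clean fix is either to regard $\alpha_i$ as a functional of the increments (harmless, since $W(0)$ is deterministic under $\mathbb{P}^x$), or, better, to re-root the continuation: after $\tau_0^M$ run $\alpha_i$ on the re-based path $x+W(\tau_0^M+\cdot)-W(\tau_0^M)$, so that the conditional continuation value is $\mathbb{E}^x\left[f\left(W(\tau^{\alpha_i})+W(\tau_0^M)-x\right)\right]$, whose convergence to $V_i(x)$ as $\tau_0^M\to 0$ follows exactly from the continuity of $f$ and the quadratic domination you invoke via Assumption~\ref{Assumption:Growth}. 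With that adjustment, together with the Lemma~\ref{Lem:MRT}-based admissibility of the concatenated control and your uniform moment bounds on $W(\tau_0^M)$, the limit passage and hence the concavity inequality are sound.
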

\begin{proof}
By the results of Section \ref{Subsection:Viscosity}, we know $w$ is a viscosity solution of (\ref{Eqn:AuxiliaryHJB}). In particular, we claim it is a viscosity supersolution of
\[-w_{yy}\geq 0\hspace{0.5cm}\text{on }\mathbb{R}\times(0,\infty).\]
This is because for any smooth function $\phi$ touching $w$ from below at $(x_0,y_0)\in\mathbb{R}\times(0,\infty)$, we have
\[\phi_y(x_0,y_0)\geq\sup\limits_{\alpha\in\mathbb{R}}\left[\frac{1}{2}\phi_{xx}(x_0,y_0)+\alpha\phi_{xy}(x_0,y_0)+\frac{1}{2}\alpha^2\phi_{yy}(x_0,y_0)\right]\]
by the viscosity solution property. However, this implies $-\phi_{yy}(x_0,y_0)\geq 0$ because, otherwise, the right-hand-side would be arbitrarily large for large $\alpha$.

We conclude that for each fixed $x_0\in\mathbb{R}$, the function $y\mapsto w(x_0,y)$ is a viscosity supersolution
\[-w_{yy}\geq 0\hspace{0.5cm}\text{on }(0,\infty).\]
This, in turn implies that $y\mapsto w(x_0,y)$ is concave on $(0,\infty)$ for each fixed $x_0\in\mathbb{R}$. See, for example, Propositions 6.9 and Lemma 6.23 in \cite{Touzi2013}.

Finally, by the growth bounds in Lemma \ref{Lem:AuxiliaryValueBounds}, it is clear that $w$ is continuous at $(x_0,0)$, so we can conclude that for each $x_0\in\mathbb{R}$, the map $y\mapsto w(x_0,y)$ is concave on $[0,\infty)$.
\end{proof}

We can immediately apply this result to find conditions under which the the sequential time-consistent optimization problem (\ref{Eqn:SeqOptimization}) is concave.

\begin{cor}\label{Cor:Concavity}Suppose that Assumption~\ref{Assumption:Growth} holds. Let $g:\mathbb{R}^+\to\mathbb{R}$ be concave. Then the map
\[y\mapsto w(x,y)+g(y)\]
is concave for each $x\in\mathbb{R}$.\end{cor}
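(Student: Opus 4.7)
The plan is to invoke Proposition \ref{Prop:ConcaveValueFunction} and observe that the sum of two concave functions is again concave. Concretely, for each fixed $x\in\mathbb{R}$, Proposition \ref{Prop:ConcaveValueFunction} gives that $y\mapsto w(x,y)$ is concave on $[0,\infty)$, and by hypothesis $g$ is concave on $\mathbb{R}^+$. Since the domains coincide and concavity is preserved under pointwise addition, the map $y\mapsto w(x,y)+g(y)$ is concave. There is no real obstacle here: the only inputs used are Proposition \ref{Prop:ConcaveValueFunction} (which in turn required Assumption \ref{Assumption:Growth} to guarantee that $w$ is finite, locally bounded, and has the correct viscosity-supersolution property in $y$) and the elementary fact that sums of concave functions are concave. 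The statement is purely pointwise in $x$, so no joint continuity or regularity in $x$ needs to be established.
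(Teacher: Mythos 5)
Your proposal is correct and matches the paper's proof, which likewise cites Proposition \ref{Prop:ConcaveValueFunction} and the concavity of $g$, using that a sum of concave functions is concave. No further comment is needed.
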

\begin{proof}This follows immediately from Proposition \ref{Prop:ConcaveValueFunction} and the concavity of $g$.\end{proof}

In order to construct an optimal stopping time as outlined in Section \ref{Subsection:Construction}, we first need to find a maximizing element $y\in[0,\infty)$ of the sequential time-inconsistent optimization problem (\ref{Eqn:SeqOptimization}). We next show a coercivity condition on $g$ which will provide existence of a minimizing element.

\begin{thm}\label{Thm:MaximalElement}
Suppose that Assumption~\ref{Assumption:Growth} holds, that $g$ is concave, and that there exists $\lambda <\beta-\gamma$ such that
\[g(y)\leq\lambda y.\]
Then for each $x\in\mathbb{R}$, the map
\[y\mapsto w(x,y)+g(y)\]
is concave and has a maximizing element $y\in[0,\infty)$.\end{thm}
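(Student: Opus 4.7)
The concavity of $h(y) := w(x,y) + g(y)$ is immediate from Corollary \ref{Cor:Concavity}, so the plan is to establish existence of a maximizer on $[0,\infty)$. My approach is to show that $h$ is continuous on $[0,\infty)$ and satisfies $h(y) \to -\infty$ as $y \to \infty$; a continuous coercive function on a closed set attains its supremum, and this will suffice.

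For the coercivity estimate I would revisit the computation in the proof of Lemma \ref{Lem:AuxiliaryValueBounds} and retain the sharper intermediate bound that appears there before Young's inequality is applied. Combining Assumption \ref{Assumption:Growth} with Tanaka's formula, the optional-stopping identity $\mathbb{E}^x[W(\tau)^2] = x^2 + y$, and Jensen's inequality, one obtains for every $\tau \in \mathcal{T}_{x,y}$
\[
\mathbb{E}^x[f(W(\tau))] \;\leq\; \gamma(1+|x|) - \beta x^2 + \gamma\sqrt{y} - \beta y,
\]
and therefore the same bound for $w(x,y)$. Adding $g(y) \leq \lambda y$ and using the elementary inequality $\sqrt{y} \leq 1 + y$ yields
\[
h(y) \;\leq\; \gamma(2+|x|) - \beta x^2 + (\gamma + \lambda - \beta)\,y.
\]
The hypothesis $\lambda < \beta - \gamma$ makes the coefficient of $y$ strictly negative, so $h(y) \to -\infty$, which is precisely the role of this condition.

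For continuity on $(0,\infty)$, I would invoke Proposition \ref{Prop:ConcaveValueFunction} (concave functions are continuous on the interior of their domain) together with the hypothesized continuity of $g$. At $y = 0$ I would argue directly: the identity $\mathbb{E}^x[(W(\tau) - x)^2] = y$, valid for every $\tau \in \mathcal{T}_{x,y}$, shows $W(\tau) \to x$ in $L^2$ uniformly in $\tau$ as $y \downarrow 0$. Combined with the continuity of $f$ and the quadratic-linear control on $|f|$ from Assumption \ref{Assumption:Growth}, a standard $\epsilon$-splitting (uniform continuity of $f$ on $\{|z-x| < r\}$ and Markov's inequality on the complement, using the uniform $L^2$ bound to control the tail) gives $\mathbb{E}^x[f(W(\tau))] \to f(x)$ uniformly in $\tau \in \mathcal{T}_{x,y}$, so that $w(x,y) \to f(x) = w(x,0)$. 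With $h$ now continuous on $[0,\infty)$ and coercive, a standard compactness argument produces the maximizer.

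The main obstacle, as I see it, is the bookkeeping in the coercivity bound: the $(1-\beta)y$ term displayed in the statement of Lemma \ref{Lem:AuxiliaryValueBounds} is too weak to reach the claimed range of $\lambda$, so the sharper $\gamma\sqrt{y} - \beta y$ form has to be re-extracted from the intermediate step of that proof. The condition $\lambda < \beta - \gamma$ is calibrated precisely so that, after the elementary replacement $\sqrt{y} \leq 1 + y$, the linear coefficient becomes strictly negative and coercivity holds.
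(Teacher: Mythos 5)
Your proposal is correct and follows essentially the same route as the paper: concavity via Corollary \ref{Cor:Concavity}, then a re-derivation of the Lemma \ref{Lem:AuxiliaryValueBounds} estimate keeping the sharper $\gamma\sqrt{y}-\beta y$ term so that $w(x,y)+g(y)$ is dominated by a strictly decreasing affine function of $y$ when $\lambda<\beta-\gamma$, hence coercive. Your explicit verification of continuity of $w(x,\cdot)$ at $y=0$ is a welcome bit of extra care (the paper relies on the continuity claim made in Proposition \ref{Prop:ConcaveValueFunction}), but it does not constitute a different method.
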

\begin{proof}
Fix $x\in\mathbb{R}$. From Corollary \ref{Cor:Concavity}, we know the map $y\mapsto w(x,y)+g(y)$ is concave. We claim that this map is also bounded from above by a strictly decreasing linear function. Therefore, there exists a maximizing element $y^\star\in[0,\infty)$.

Let $y\in[0,\infty)$ and $\tau\in\mathcal{T}_y$ be arbitrary. Similarly to Lemma~\ref{Lem:AuxiliaryValueBounds}, we compute
\begin{eqnarray}
\mathbb{E}^x\left[f(W(\tau))\right] & \leq & \gamma + \gamma\mathbb{E}^x\left[|W(\tau)|\right]-\beta\mathbb{E}^x\left[W(\tau)^2\right]\nonumber\\
& \leq & \gamma(1+|x|)-\beta x^2+(\gamma-\beta)\mathbb{E}^x\left[\tau\right]\nonumber\\
& = & \gamma(1+|x|)-\beta x^2+(\gamma-\beta)y.\nonumber
\end{eqnarray}
Because $\tau$ was arbitrary, we conclude
\[w(x,y)\leq\gamma(1+|x|)-\beta x^2+(\gamma-\beta)y.\]
Together with the growth assumption of $g$
\[w(x,y)+g(y)\leq\gamma(1+|x|)-\beta x^2+(\lambda+\gamma-\beta)y.\]
Because $x$ is fixed and $\lambda<\beta-\gamma$, the map $y\mapsto w(x,y)+g(y)$ is bounded from above by a strictly decreasing linear function in $y$. Then the result follows.
\end{proof}

\section{Generalizations and Concluding Remarks}\label{Section:Generalization}

The goal of this section is to highlight various generalizations and connections. In particular, we extend the results of Section \ref{Section:EquivalentProblem} to more general diffusion processes driven by Brownian motion and also other types of non-linearities in the time-inconsistent stopping problem. We also highlight a connection between our PDE and the Monge-Ampere equation. This section is very informal and meant as a launching point for future research. As such, we do not reproduce analogues of the results in Section \ref{Section:Properties}, though we expect that with appropriate conditions on $f$ and $g$ analogous results to all major properties will hold.

\subsection{Extension to Diffusion Processes}\label{Subsection:Extension}

Consider a probability space $(\Omega,\mathcal{F},\mathbb{P})$, which admits a Brownian motion $W$ valued in $\mathbb{R}^d$. Let $\mathbb{F}$ be the $\mathbb{P}$-augmentation of the canonical filtration of $W$. Let $\mu:\mathbb{R}^n\to\mathbb{R}^n$ and $\sigma:\mathbb{R}^n\to\mathcal{M}(n,d)$ be Lipschitz continuous with linear growth bounds. Then for each $x\in\mathbb{R}^n$, there is a unique strong solution to the SDE
\begin{equation}\nonumber
X(t) = x + \int_0^t \mu(X(s))ds + \int_0^t \sigma(X(s))dW(s).
\end{equation}
Let $\mathbb{P}^x$ denote the probability measure on $X$ corresponding to starting at $x\in\mathbb{R}$.

Denote the infinitesimal generator of the diffusion process $X$ by
\begin{equation}\nonumber
\mathcal{L}_x\phi := \mu\cdot D\phi + \frac{1}{2}\text{Tr}\left[\sigma\sigma^\top D^2\phi\right].
\end{equation}

The main result of this section is as follows.
\begin{prop}
Consider the time-inconsistent optimal stopping problem given by
\begin{equation}\nonumber
v(x) := \sup\limits_{\tau\in\mathcal{T}}\mathbb{E}^x\left[f(X(\tau))\right]+ g(\mathbb{E}^x\left[\tau\right]).
\end{equation}
Furthermore, consider an auxillary time-consistent control problem defined by
\begin{equation}\label{Eqn:GeneralAuxFunction}
w(x,y):=\left\{\begin{array}{rl}
\sup\limits_{\alpha\in\mathcal{A}}&\mathbb{E}^{x,y}\left[f(X(\tau^\alpha))\right]\\
& dX = \mu(X)dt + \sigma(X)dW\\
& dY^\alpha = -dt + \alpha dW,\,Y^\alpha(0)=y\\
& \tau^\alpha = \inf\left\{t\geq 0\mid Y^\alpha(t)=0\right\}.
\end{array}\right.
\end{equation}
Then we have the identity
\begin{equation}\nonumber
v(x) = \sup\limits_{y\geq 0}\left[w(x,y)+g(y)\right].
\end{equation}
\end{prop}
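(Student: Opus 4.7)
The plan is to follow the same three-step strategy used for $X=W$, noting that every step except the trivial evaluation $\mathbb{E}^x[f(W(\tau))]$ is independent of the particular dynamics of the state process. Concretely, I would first introduce, for each $(x,y)\in\mathbb{R}^n\times[0,\infty)$, the constrained family
\[
\mathcal{T}_{x,y}:=\{\tau\in\mathcal{T}\mid\mathbb{E}^x[\tau]=y\},
\qquad
\tilde{w}(x,y):=\sup_{\tau\in\mathcal{T}_{x,y}}\mathbb{E}^x\bigl[f(X(\tau))\bigr].
\]
Since $\mathcal{T}=\bigcup_{y\geq 0}\mathcal{T}_{x,y}$ for every $x$, the conditioning argument from Theorem~\ref{Thm:AuxEquivalence} goes through verbatim (it only uses the structure $g(\mathbb{E}^x[\tau])$ and partitioning the stopping times by their mean), giving
$v(x)=\sup_{y\geq 0}[\tilde{w}(x,y)+g(y)]$.

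The second step is to show $w=\tilde{w}$, which is the direct analogue of Theorem~\ref{Thm:Equivalence}. The key observation here is that the filtration $\mathbb{F}$ is the $\mathbb{P}$-augmentation of the canonical filtration of the driving Brownian motion $W$, so the martingale representation theorem remains available. Hence Lemma~\ref{Lem:MRT} carries over unchanged: given $\tau\in\mathcal{T}_{x,y}$ with $\mathbb{E}^x[\tau^2]<\infty$ and $\mathbb{E}^x[\tau]=y$, the martingale $t\mapsto\mathbb{E}^x[\tau\mid\mathcal{F}_t]$ admits a representation $y+\int_0^t\alpha(s)\,dW(s)$ for some square-integrable progressive $\alpha\in\mathcal{A}$, and the argument that the hitting time $\sigma$ of zero by $y-t+\int_0^t\alpha\,dW$ coincides with $\tau$ is based solely on the optional stopping theorem applied to this martingale, not on the dynamics of $X$. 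Conversely, for any $\alpha\in\mathcal{A}$ the hitting time $\tau^\alpha$ satisfies $\mathbb{E}^x[\tau^\alpha]=y$ and $\mathbb{E}^x[(\tau^\alpha)^2]<\infty$ by the same computation as before.

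Once this identification is in hand, the double inequality $w\leq\tilde{w}$ and $w\geq\tilde{w}$ is immediate: on one side, any admissible $\alpha\in\mathcal{A}$ yields $\tau^\alpha\in\mathcal{T}_{x,y}$ with $\mathbb{E}^{x,y}[f(X(\tau^\alpha))]=\mathbb{E}^x[f(X(\tau^\alpha))]\leq\tilde{w}(x,y)$; on the other, any $\tau\in\mathcal{T}_{x,y}$ gives rise via the representation above to an $\alpha\in\mathcal{A}$ with $\tau^\alpha=\tau$ a.s., so that $\mathbb{E}^x[f(X(\tau))]=\mathbb{E}^{x,y}[f(X(\tau^\alpha))]\leq w(x,y)$. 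Combining with the first step yields the claimed identity.

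The only genuinely new point to verify is that $X$ and $Y^\alpha$ live naturally on the same filtered probability space. This is automatic because $X$ is $\mathbb{F}$-adapted as the strong solution of an SDE driven by $W$, while $Y^\alpha$ is constructed from $W$ and a progressive $\alpha$. Consequently $\tau^\alpha$ is an $\mathbb{F}$-stopping time and $X(\tau^\alpha)$ is well-defined with the expectation computed under $\mathbb{P}^x$. I expect this to be the main potential pitfall in writing out a careful proof — one must take care that the martingale representation used in step two is with respect to the \emph{same} Brownian motion that drives $X$, so that the auxiliary dynamics do not disturb the diffusion $X$ — but this is exactly the setting posited at the beginning of Section~\ref{Subsection:Extension}, so the argument goes through unchanged.
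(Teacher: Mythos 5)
Your proposal is correct and follows essentially the same route as the paper, whose proof simply observes that the earlier argument goes through once the martingale representation theorem is applied with respect to the $d$-dimensional Brownian motion driving $X$ (under the filtration generated by $W$, exactly as you note). The one detail you should state explicitly, and which the paper does flag, is that the representing integrand---and hence the control $\alpha$---is now $\mathbb{R}^d$-valued rather than scalar, so ``Lemma~\ref{Lem:MRT} carries over unchanged'' should be read with that modification.
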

\begin{proof}The proof is as before, except now a higher-dimensional version of martingale representation theorem must be used, see \cite{Touzi2013}. In particular, this makes the control be valued in $\mathbb{R}^d$ rather than one-dimensional.
\end{proof}

Then if $w$ is locally bounded, then as before we can identify it as a viscosity solution of an HJB.
\begin{cor}
Assume the axillary value function $w$ defined in \eqref{Eqn:GeneralAuxFunction} is locally bounded. Then it is a viscosity solution to the HJB
\begin{equation}\label{Eqn:GeneralAuxHJB}\nonumber
\left\{\begin{array}{rl}
u_y - \sup\limits_{\alpha\in\mathbb{R}^d}\left[\mathcal{L}_x u+\alpha^\top\sigma D_x u_y+\frac{1}{2}|\alpha|^2 u_{yy}\right] = 0 & \text{in }\mathbb{R}^n\times(0,\infty)\\
u = f & \text{on }\mathbb{R}^n\times\{y=0\}.
\end{array}\right.\end{equation}
\end{cor}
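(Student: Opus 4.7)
The proof is essentially a direct generalization of Corollary \ref{Cor:ViscositySolution}, so my plan is to follow the same template while making explicit the extra bookkeeping required by the $n$-dimensional state $X$ and the $\mathbb{R}^d$-valued control $\alpha$. The boundary condition $u=f$ on $\mathbb{R}^n\times\{y=0\}$ is immediate: when $y=0$, the exit time $\tau^\alpha=0$ almost surely regardless of $\alpha$, so $w(x,0)=\mathbb{E}^{x,0}[f(X(0))]=f(x)$. The real content is the interior HJB inequality.

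My first step would be to establish the dynamic programming principle for $w$. For any $(x,y)\in\mathbb{R}^n\times(0,\infty)$ and any stopping time $\theta$ bounded by a deterministic constant with $\theta\leq\tau^\alpha$,
\begin{equation*}
w(x,y) = \sup_{\alpha\in\mathcal{A}} \mathbb{E}^{x,y}\bigl[w\bigl(X(\theta),Y^\alpha(\theta)\bigr)\bigr].
\end{equation*}
This is standard given the local boundedness hypothesis, the Markovian structure of $(X,Y^\alpha)$, and the fact that $f$ is continuous and bounded from above. I would invoke the general measurable selection / DPP theory (e.g.\ as presented in \cite{Touzi2013}) rather than reprove it.

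Given the DPP, the viscosity subsolution and supersolution properties then follow by the usual test-function argument. For the subsolution inequality, I would take a smooth $\phi$ touching $w$ from above at an interior point $(x_0,y_0)$, restrict to an arbitrary constant control $\alpha\equiv a\in\mathbb{R}^d$, apply It\^o's formula to $\phi(X,Y^a)$ up to a small exit time from a neighborhood, divide by the exit time and send it to zero. Since $\mathrm{d}Y^a = -\mathrm{d}t+a\,\mathrm{d}W$ and $\mathrm{d}X=\mu(X)\mathrm{d}t+\sigma(X)\mathrm{d}W$ are correlated through the same Brownian motion, the cross-term $a^\top\sigma(x_0)D_x\phi_y(x_0,y_0)$ appears in the limit, yielding
\begin{equation*}
\phi_y(x_0,y_0) - \Bigl[\mathcal{L}_x\phi(x_0,y_0) + a^\top\sigma(x_0)D_x\phi_y(x_0,y_0) + \tfrac{1}{2}|a|^2\phi_{yy}(x_0,y_0)\Bigr]\leq 0
\end{equation*}
for each $a\in\mathbb{R}^d$, and hence the subsolution inequality after taking the supremum. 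The supersolution inequality is obtained by the dual argument: using the DPP with $\phi$ touching from below, near-optimal controls give the reverse limit inequality.

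The one subtlety, exactly as in the one-dimensional case, is that the set of admissible controls $\mathbb{R}^d$ is unbounded, so the Hamiltonian inside the brackets can be $+\infty$; the supremum is attained at a finite $\alpha$ only when $\phi_{yy}(x_0,y_0)<0$, and otherwise the inequality is interpreted in the generalized sense of \cite{DaLioLey2010}. This is the main obstacle to reusing standard proofs verbatim, but it is handled identically to Corollary \ref{Cor:ViscositySolution}: the subsolution inequality is automatic whenever $\phi_{yy}(x_0,y_0)\geq 0$ (by choosing $a$ with large norm one would make the right-hand side $+\infty$, and the inequality $\phi_y\leq +\infty$ is vacuous in the relaxed formulation), while the supersolution inequality forces $-\phi_{yy}\geq 0$ at any contact point, restoring a finite Hamiltonian and giving the remaining inequality through the same It\^o expansion. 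No further structural difficulty arises from the presence of $\mu,\sigma$, because they enter smoothly through $\mathcal{L}_x$ and are Lipschitz with linear growth by assumption.
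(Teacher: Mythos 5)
Your overall plan is the right one and matches what the paper intends: the paper gives no detailed proof of this corollary, but simply notes that once $w$ is locally bounded, the dynamic programming principle and the standard test-function procedure (as in \cite{Touzi2013}) identify $w$ as a viscosity solution, with the Hamiltonian interpreted in the relaxed sense of \cite{DaLioLey2010} because the control set $\mathbb{R}^d$ is unbounded --- exactly as in Corollary \ref{Cor:ViscositySolution}. Your treatment of the boundary condition, of the cross term $a^\top\sigma D_x\phi_y$ coming from the common Brownian motion, and of the degenerate case $\phi_{yy}\geq 0$ is all fine.

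However, you have the two halves of the viscosity argument paired the wrong way around, and as written each half fails. For a maximization problem, the \emph{supersolution} property is the one obtained from an arbitrary fixed constant control $a$ together with the easy DPP inequality $w(x_0,y_0)\geq\mathbb{E}^{x_0,y_0}\bigl[w(X(\theta),Y^{a}(\theta))\bigr]$ and a test function $\phi$ touching $w$ \emph{from below}: then $\phi(x_0,y_0)=w(x_0,y_0)\geq\mathbb{E}[w(\cdot)]\geq\mathbb{E}[\phi(\cdot)]$, and It\^o's formula gives $\phi_y-\bigl[\mathcal{L}_x\phi+a^\top\sigma D_x\phi_y+\tfrac12|a|^2\phi_{yy}\bigr]\geq 0$ for every $a$, hence the supersolution inequality after taking the supremum. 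In your configuration (touching from above, fixed control) the chain breaks: from $w\leq\phi$ and $w(x_0,y_0)\geq\mathbb{E}[w(\cdot)]$ you cannot bound $\mathbb{E}[\phi(\cdot)]$ by $\phi(x_0,y_0)$, so the claimed inequality $\phi_y-[\cdots]\leq 0$ does not follow. Symmetrically, the \emph{subsolution} property is the one that requires the hard direction of the DPP (near-optimal or $\epsilon$-optimal controls, or a contradiction argument) with $\phi$ touching $w$ \emph{from above}; your proposed pairing of near-optimal controls with a test function from below again produces inequalities pointing in incompatible directions. The fix is purely a swap of the two pairings and the argument is otherwise standard, but as stated the derivation of both viscosity inequalities is incorrect.
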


\subsection{Connection to Monge-Ampere Equation}

Consider the one-dimensional problem from Section \ref{Section:Formulation}. We showed in Section \ref{Section:Properties} that the auxiliary value function $w$ is the unique viscosity solution of the HJB equation (\ref{Eqn:AuxiliaryHJB}) which is concave in $y$ for each $x$ and satisfies certain growth conditions. We claim that if $w$ is smooth, then it is also a solution to a Monge-Ampere-like equation.

\begin{prop}
Let $w$ be the auxiliary value function which is a viscosity solution of (\ref{Eqn:AuxiliaryHJB}). If $w$ is smooth, then $w$ a solution of
\begin{equation}\label{Eqn:MongeAmpere}
\left\{\begin{array}{rl}
2u_yu_{yy} - \det D^2 u = 0 & \text{in }\mathbb{R}\times(0,\infty)\\
u = f & \text{on }\mathbb{R}\times\{y=0\}.
\end{array}\right.\end{equation}
\end{prop}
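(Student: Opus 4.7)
The plan is to extract the Monge-Ampere-like identity directly from the HJB \eqref{Eqn:AuxiliaryHJB} by performing the inner supremum over $\alpha\in\mathbb{R}$ explicitly, using smoothness of $w$ to justify pointwise calculus. The right-hand side of the HJB is a quadratic in $\alpha$ with leading coefficient $\tfrac{1}{2}u_{yy}$. For the supremum to be finite, which it must be since $w$ solves the HJB, the standard second-order condition forces $u_{yy}\leq 0$ everywhere; this is also already known from Proposition~\ref{Prop:ConcaveValueFunction}.

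First I would treat the generic case $u_{yy}(x,y)<0$. The quadratic $\alpha\mapsto \tfrac{1}{2}u_{xx}+\alpha u_{xy}+\tfrac{1}{2}\alpha^2 u_{yy}$ is then strictly concave, with unique maximizer $\alpha^\star=-u_{xy}/u_{yy}$. Substituting $\alpha^\star$ into the bracket and equating to $u_y$ yields
\begin{equation}\nonumber
u_y \;=\; \tfrac{1}{2}u_{xx} \;-\; \frac{u_{xy}^{\,2}}{2\,u_{yy}}.
\end{equation}
Multiplying through by $2u_{yy}$ and rearranging gives exactly $2u_y u_{yy} = u_{xx}u_{yy}-u_{xy}^{\,2} = \det D^2 u$, which is \eqref{Eqn:MongeAmpere}.

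Next I would handle the degenerate case $u_{yy}(x,y)=0$. Here the quadratic in $\alpha$ reduces to the linear function $\tfrac{1}{2}u_{xx}+\alpha u_{xy}$; finiteness of the supremum forces $u_{xy}=0$ at such points, and the HJB then collapses to $u_y=\tfrac{1}{2}u_{xx}$. In this situation both sides of \eqref{Eqn:MongeAmpere} vanish identically: $2u_y u_{yy}=0$ and $\det D^2 u = u_{xx}u_{yy}-u_{xy}^{\,2}=0$, so the identity holds trivially. Combining the two cases gives \eqref{Eqn:MongeAmpere} pointwise on $\mathbb{R}\times(0,\infty)$, and the boundary condition $u=f$ on $\{y=0\}$ is inherited directly from the HJB.

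The calculation is almost mechanical, so there is no real obstacle; the only subtlety is remembering to check the degenerate stratum $\{u_{yy}=0\}$, where the optimizer $\alpha^\star$ is not defined and one must instead use finiteness of the supremum to force $u_{xy}=0$ before concluding. This also explains why the resulting equation has the ``unusual'' form $2u_y u_{yy}-\det D^2u=0$ rather than $\det D^2 u = $ (something): the HJB has been cleared of the denominator $u_{yy}$, so the equation remains meaningful even along the degenerate set.
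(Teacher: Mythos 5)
Your argument is correct and is essentially the paper's own proof: optimize the quadratic in $\alpha$ explicitly, substitute $\alpha^\star=-u_{xy}/u_{yy}$ when $u_{yy}<0$, and on the degenerate set $\{u_{yy}=0\}$ use finiteness of the supremum to force $u_{xy}=0$ so both sides vanish (your remark that the HJB rules out $u_{yy}>0$ covers the paper's remaining case). No gaps.
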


\begin{remark}
The PDE \eqref{Eqn:MongeAmpere} is not elliptic, so the viscosity solution theory of \cite{CrandallIshiiLions1992} does not immediately apply. However, there are many classical results on viscosity solution interpretations of the standard Monge-Ampere equation (see \cite{IshiiLions1990}). However, we focus on classical solution theory at the moment.
\end{remark}

\begin{remark}
We consider \eqref{Eqn:MongeAmpere} as ``Monge-Ampere-like'' in the sense that it can be re-written as
\[-\det\left(\begin{array}{cc}
u_{xx}-2u_y & u_{xy} \\
u_{xy} & u_{yy}
\end{array}\right) = 0.\]
That is, it involves the determinant of second-order linear partial differential operators. Unlike in the classical case, the second-order partial derivative in $x$ is replaced by a heat equation analogue. We also note that this is a degenerate form of the two-dimensional generalized Monge-Ampere equations studied in \cite{Schulz1990}. 
\end{remark}

\begin{proof}
Because $w$ is a smooth viscosity solution of \eqref{Eqn:AuxiliaryHJB}, it is a classical solution. Fix $(x_0,y_0)\in\mathbb{R}\times(0,\infty)$. Then
\[w_y(x_0,y_0) = \sup\limits_{\alpha\in\mathbb{R}}\left[\frac{1}{2}w_{xx}(x_0,y_0)+\alpha w_{xy}(x_0,y_0)+\frac{1}{2}\alpha^2 w_{yy}(x_0,y_0)\right].\]
\begin{enumerate}
\item Suppose that $w_{yy}(x_0,y_0)<0$. Then the supremum is achieved at
\[\alpha^* := -\frac{w_{xy}(x_0,y_0)}{w_{yy}(x_0,y_0)}.\]
We can compute
\[w_y(x_0,y_0) = \frac{1}{2}w_{xx}(x_0,y_0)-\frac{w_{xy}(x_0,y_0)^2}{2w_{yy}(x_0,y_0)}=\frac{w_{xx}(x_0,y_0)w_{yy}(x_0,y_0)-w_{xy}(x_0,y_0)^2}{2w_{yy}(x_0,y_0)},\]
and consequently
\[2w_y(x_0,y_0)w_{yy}(x_0,y_0) = \det D^2w(x_0,y_0).\]

\item Suppose that $w_{yy}(x_0,y_0)=0$. Then for the PDE to hold, we must have $w_{xy}(x_0,y_0)=0$. Then we can compute
\[2w_y(x_0,y_0)w_{yy}(x_0,y_0)-\det D^2w(x_0,y_0) = (2w_y(x_0,y_0)-w_{xx}(x_0,y_0))w_{yy}(x_0,y_0)+w_{xy}(x_0,y_0)^2 = 0,\]
because $w_{xy}(x_0,y_0)=w_{yy}(x_0,y_0)=0$.

\item Finally, suppose that $w_{yy}(x_0,y_0)>0$. Then taking $\alpha$ large, we can make the supremum arbitrarily large and contradict the fact that $w$ is a smooth solution of the PDE at $(x_0,y_0)$. Then this case never occurs.
\end{enumerate}
\end{proof}

More generally, the PDE in Section \ref{Subsection:Extension} can be written in a Monge-Ampere-like form. There may be room for further analysis of these PDEs directly. Furthermore, this form may be amenable to numerics, as there is a whole literature on wide-stencil monotone schemes for numerical solutions of the Monge-Ampere equation (see \cite{Oberman2008}).

\subsection{Extension to Other Non-linearities}\label{Subsection:GeneralInconsistentcies}

In Section \ref{Section:EquivalentProblem}, we alluded to the claim that the approach used in this paper can be extended to other non-linearities leading to time-inconsistent stochastic optimization problems. The same type of argument holds in more general cases, but the resulting control problem, analogous to (\ref{Defn:AuxProblem}), has a terminal constraint which is non-standard from the perspective of viscosity solutions.

To illustrate this point, consider the one-dimensional setting of Section \ref{Section:Formulation}, but a more general time-inconsistent problem depending non-linearly on the higher moments of $(\tau,W(\tau))$.

\begin{prop}
Let $h:\mathbb{R}^+\times\mathbb{R}\to\mathbb{R}^m$ be a smooth, bounded function. Consider the time-inconsistent optimal stopping problem given by
\begin{equation}\label{Eqn:GeneralTimeInconsistent}
v(x) := \sup\limits_{\tau\in\mathcal{T}}\mathbb{E}^x\left[f(W(\tau))\right]+ g(\mathbb{E}^x\left[h(\tau,W(\tau))\right]).
\end{equation}
Let $\mathcal{A}$ and $\mathcal{B}$ be the collections of one- and $m$-dimensional square-integrable, progressively-measurable processes respectively. Consider an auxiliary time-consistent control problem defined by
\begin{equation}\label{Eqn:GeneralAuxFunction2}
w(x,y,z):=\left\{\begin{array}{rl}
\sup\limits_{(\alpha,\beta)\in\mathcal{A}\times\mathcal{B}}&\mathbb{E}^{x,y,z}\left[f(W(\tau^\alpha))\right]\\
& dY^\alpha = -dt + \alpha dW,\,Y^\alpha(0)=y\\
& dZ^\beta = \beta dW,\,Z^\beta(0)=z\\
& \tau^\alpha = \inf\left\{t\geq 0\mid Y^\alpha(t)=0\right\}\\
& Z^\beta(\tau^\alpha) = h(\tau^\alpha,W(\tau^\alpha)).
\end{array}\right.
\end{equation}
Then we have the identity
\begin{equation}\nonumber
v(x) = \sup\limits_{(y,z)\in[0,\infty)\times\mathbb{R}^m}\left[w(x,y,z)+g(z)\right].
\end{equation}
\end{prop}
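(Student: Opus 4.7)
My plan is to mirror the two-step structure used for the original problem: first condition on the time-inconsistent feature $\mathbb{E}^x[h(\tau,W(\tau))]$ (and on $\mathbb{E}^x[\tau]$, since the stopping-time parametrization requires it), then identify the resulting expectation-constrained stopping problem with the auxiliary control problem via an augmented martingale representation. Define
\[
\mathcal{T}_{x,y,z} := \bigl\{\tau\in\mathcal{T}\mid \mathbb{E}^x[\tau]=y,\ \mathbb{E}^x[h(\tau,W(\tau))]=z\bigr\},
\]
and the constrained value
\[
\tilde w(x,y,z) := \sup_{\tau\in\mathcal{T}_{x,y,z}} \mathbb{E}^x[f(W(\tau))],
\]
with the convention $\tilde w = -\infty$ when $\mathcal{T}_{x,y,z}=\emptyset$. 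Since $h$ is bounded, every $\tau\in\mathcal{T}$ gives $h(\tau,W(\tau))\in L^2$, so $\mathcal{T}$ is the disjoint union of the $\mathcal{T}_{x,y,z}$ over $(y,z)\in[0,\infty)\times\mathbb{R}^m$. Copying the telescoping argument of Theorem~\ref{Thm:AuxEquivalence} then yields
\[
v(x) = \sup_{(y,z)}\bigl[\tilde w(x,y,z) + g(z)\bigr],
\]
so the task reduces to proving $\tilde w(x,y,z) = w(x,y,z)$.

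The core step is an analogue of Lemma~\ref{Lem:MRT} for the pair $(\alpha,\beta)$. Given $\tau\in\mathcal{T}_{x,y,z}$, I would apply the martingale representation theorem twice: first to the $L^2$ random variable $\tau$ to obtain a progressively-measurable, square-integrable $\alpha$ with $\tau = y + \int_0^\infty \alpha\,dW$, and second to $h(\tau,W(\tau))$ to obtain $\beta\in\mathcal{B}$ with $h(\tau,W(\tau)) = z + \int_0^\infty \beta\,dW$. Conditioning on $\mathcal{F}_\tau$ and repeating the argument from Lemma~\ref{Lem:MRT} show that $\tau = \inf\{t\geq 0 : Y^\alpha(t)=0\}$ almost surely, while conditioning on $\mathcal{F}_{\tau^\alpha}$ gives $Z^\beta(\tau^\alpha) = \mathbb{E}^x[h(\tau,W(\tau))\mid\mathcal{F}_{\tau^\alpha}] = h(\tau,W(\tau))$ almost surely (since $\tau^\alpha = \tau$ and $h(\tau,W(\tau))$ is $\mathcal{F}_\tau$-measurable). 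Hence $(\alpha,\beta)$ is admissible for $w(x,y,z)$ and achieves the same value, giving $\tilde w(x,y,z)\leq w(x,y,z)$.

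For the reverse inequality, let $(\alpha,\beta)$ be admissible in the control problem. Lemma~\ref{Lem:MRT} already gives $\tau^\alpha\in\mathcal{T}_{x,y}$. Taking $\mathbb{P}^{x,y,z}$-expectation of the terminal identity $Z^\beta(\tau^\alpha) = h(\tau^\alpha,W(\tau^\alpha))$ and using that $Z^\beta$ is a martingale started at $z$ together with $\tau^\alpha\in L^2$ yields $\mathbb{E}^x[h(\tau^\alpha,W(\tau^\alpha))] = z$, hence $\tau^\alpha\in\mathcal{T}_{x,y,z}$ and $\mathbb{E}^{x,y,z}[f(W(\tau^\alpha))]\leq \tilde w(x,y,z)$. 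Taking suprema yields $w\leq \tilde w$. Combining the two inequalities and substituting into the first display completes the proof.

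The hard part here is handling the terminal constraint $Z^\beta(\tau^\alpha) = h(\tau^\alpha,W(\tau^\alpha))$ rigorously; this is exactly the non-standard feature flagged in the paper, and it is what forces the second martingale representation. Two minor technicalities deserve care: verifying that $Z^\beta$ is a genuine martingale at time $\tau^\alpha$ (which follows from square-integrability of $\beta$ and optional stopping, using that $\tau^\alpha$ has finite second moment), and checking that the constructed $\beta$ in the forward direction is square-integrable (which follows because $h$ is bounded, so $h(\tau,W(\tau))\in L^2$ and Itô isometry bounds $\mathbb{E}^x\!\left[\int_0^\infty\!\|\beta\|^2\,ds\right]$). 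Everything else is bookkeeping identical to the one-dimensional case.
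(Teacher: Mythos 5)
Your proposal follows essentially the same route as the paper's own (sketched) proof: condition on both $\mathbb{E}^x[\tau]$ and $\mathbb{E}^x[h(\tau,W(\tau))]$ to reduce to the constrained value $\tilde w$, then identify $\tilde w$ with $w$ by applying the martingale representation theorem twice, once to $\tau$ and once to $h(\tau,W(\tau))$. You in fact supply more detail than the paper does (the two-sided inequality, optional stopping for $Z^\beta$, square-integrability of $\beta$ from boundedness of $h$), and these details are correct.
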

\begin{proof}
We provide a sketch of a proof following the same three-step procedure from Section \ref{Section:EquivalentProblem}.

\begin{enumerate}
\item For any $(x,y,z)\in\mathbb{R}\times[0,\infty)\times\mathbb{R}^m$, consider the collection of stopping time / control pairs defined as
\[\mathcal{T}_{x,y,z} := \left\{\tau\in\mathcal{T}\mid \mathbb{E}^x\left[\tau\right] = y,\,\mathbb{E}^x\left[h(\tau,W(\tau))\right]=z\right\}.\]
Then we define an auxiliary constrained stopping problem as
\[\tilde{w}(x,y,z) := \sup\limits_{\tau\in\mathcal{T}_{x,y,z}} \mathbb{E}^x\left[f(W(\tau))\right]\hspace{0.5cm}\text{for all }(x,y,z)\in\mathbb{R}\times [0,\infty)\times\mathbb{R}^m.\]
Note, we take $\tilde{w}(x,y,z)=-\infty$ if $\mathcal{T}_{x,y,z}=\emptyset$.
Then by the same argument as in Theorem \ref{Thm:AuxEquivalence}, we have
\[v(x) = \sup\limits_{(y,z)\in[0,\infty)\times\mathbb{R}^m}\left[\tilde{w}(x,y,z)+g(z)\right].\]

\item We next claim that $w=\tilde{w}$. This follows by applying the martingale representation theorem on $\tau$ to obtain $\alpha\in\mathcal{A}$ and on the vector $h(\tau,W(\tau))$ to obtain $\beta\in\mathcal{B}$.

\item Finally, given a maximizing $(y^*,z^*)\in[0,\infty)\times\mathbb{R}^m$, we can construct an $\epsilon$-suboptimal control to the generalized time-inconsistent problem (\ref{Eqn:GeneralTimeInconsistent}) starting from $x\in\mathbb{R}$ by constructing a suboptimal control of (\ref{Eqn:GeneralAuxFunction2}) starting from $(x,y^*,z^*)$.
\end{enumerate}

\end{proof}

For many $(x,y,z)\in\mathbb{R}\times[0,\infty)\times\mathbb{R}^m$, there may be no feasible controls. At these points, the auxiliary value function is formally taken to have the value $-\infty$. It is fairly straightforward to formally show that $w$ is a viscosity solution to a certain HJB at any point where it is finite, but dealing with the equality constraints is non-standard. We leave a PDE-based analysis of this problem to further work.

\section*{Acknowledgement}

The author would like to thank Erhan Bayraktar, Lawrence Craig Evans, Chandrasekhar Karnam, Goran Peskir, and Insoon Yang for their invaluable input on current approaches to time-inconsistent optimization and feedback on early drafts of this paper. He would also like to thank his anonymous reviewers for their detailed and constructive feedback.

\appendix
\section{Proof of Lemma \ref{Lem:ComparisonPrinciple}}

The proof of the comparison principle in Lemma \ref{Lem:ComparisonPrinciple} follows the typical doubling of variables argument. However, because the diffusion coefficient is unbounded from above, we must be careful about the application of the Crandall-Ishii lemma, as in \cite{DaLioLey2010}.

\begin{proof}[Proof of Lemma \ref{Lem:ComparisonPrinciple}]
Assume to the contrary that
\[\sigma := \sup\limits_{\mathbb{R}\times[0,\infty)}\left(u-v\right) > 0.\]
\begin{enumerate}
\item For any fixed $\epsilon,\lambda >0$, define
\[\Phi(x,y,t,s) := u(x,t)-v(y,s)-\frac{1}{2\epsilon^2}\left((x-y)^2+(t-s)^2\right)-\frac{\epsilon}{2}(x^2+y^2)-\lambda(t+s).\]
Because both $u$ and $v$ have the same asymptotic quadratic growth in $x$, their difference has linear growth in both $x$ and $y$. If we take $\lambda>0$ large enough, then there exists a point $(x_0,y_0,t_0,s_0)\in\mathbb{R}^2\times[0,T]^2$ such that
\[\Phi(x_0,y_0,t_0,s_0) = \max\limits_{\mathbb{R}^2\times[0,\infty)^2}\Phi(x,y,t,s).\]
We can find a sequence $\epsilon_n\to 0$ such that the corresponding maximizers $(x_n,y_n,t_n,s_n)$ satisfy
\[(x_n,y_n,t_n,s_n)\to(\hat{x},\hat{y},\hat{t},\hat{s})\in\mathbb{R}^2\times[0,T]^2.\]
For sufficiently small $\epsilon_n$, we have that
\[\Phi(x_n,y_n,t_n,s_n)\geq\sup\limits_{\mathbb{R}\times[0,\infty)}\Phi(x,x,t,t)\geq\frac{\sigma}{2}.\]
But then
\begin{eqnarray}
\limsup\limits_{n\to\infty}\left[\frac{1}{2\epsilon_n^2}\left((x_n-y_n)^2+(t_n-s_n)^2\right)\right]&\leq &\limsup\limits_{n\to\infty}\left[u(x_n,t_n)-v(y_n,s_n)-\frac{\epsilon_n}{2}(x_n^2+y_n^2)\right.\nonumber\\
& & \left.\hspace{1cm}-\lambda(t_n+s_n)-\frac{\sigma}{2}\right]\nonumber\\
& = & u^*(\hat{x},\hat{t})-v_*(\hat{y},\hat{s})-\lambda(\hat{t}+\hat{s})-\frac{\sigma}{2}.\nonumber
\end{eqnarray}
So we see $\hat{x}=\hat{y}$ and $\hat{t}=\hat{s}$. Finally, we have
\[u^*(\hat{x},\hat{t})-v_*(\hat{x},\hat{t})\geq\frac{\sigma}{2}-2\lambda\hat{t},\]
which contradicts $u\leq v$ on $t=0$ if $\hat{t}=0$. Therefore, for sufficiently small $\epsilon_n$, we have
\[\Phi(x,y,t,s)\text{ attains a maximum at }(x_n,y_n,t_n,s_n)\in\mathbb{R}^2\times(0,\infty)^2.\]

\item By the Crandall-Ishii lemma (see \cite{CrandallIshiiLions1992,Touzi2013}), for any $\rho >0$, there exists $A,B\in\mathcal{S}_2$ such that
\[\epsilon_n^{-2}(t_n-s_n)+\lambda\leq\sup\limits_{\alpha\in\mathbb{R}}\left[\frac{1}{2}a_{11}+\alpha a_{12}+\frac{1}{2}\alpha^2 a_{22}\right],\]
\[\epsilon_n^{-2}(t_n-s_n)-\lambda\geq\sup\limits_{\alpha\in\mathbb{R}}\left[\frac{1}{2}b_{11}+\alpha b_{12}+\frac{1}{2}\alpha^2 b_{22}\right],\]
and
\[-\left(\rho^{-1}+|M|\right)I\leq\left[\begin{array}{cc}A&0\\0&-B\end{array}\right]\leq M+\rho M^2,\]
where
\[M := \epsilon_n^{-2}\left[\begin{array}{cc}
I & -I \\
-I & I
\end{array}\right]
+\epsilon_n\left[\begin{array}{cccc}
1 & 0 & 0 & 0 \\
0 & 0 & 0 & 0 \\
0 & 0 & 1 & 0 \\
0 & 0 & 0 & 0 
\end{array}\right].\]
Subtracting the first two inequalities above, we obtain
\[2\lambda\leq \sup\limits_{\alpha\in\mathbb{R}}\left[\frac{1}{2}a_{11}+\alpha a_{12}+\frac{1}{2}\alpha^2 a_{22}\right]-\sup\limits_{\beta\in\mathbb{R}}\left[\frac{1}{2}b_{11}+\beta b_{12}+\frac{1}{2}\beta^2 b_{22}\right].\]
That is, for any $\eta >0$, there exists $\alpha\in\mathbb{R}$ such that for all $\beta\in\mathbb{R}$, we have
\[2\lambda-\eta\leq \frac{1}{2}a_{11}+\alpha a_{12}+\frac{1}{2}\alpha^2 a_{22}-\frac{1}{2}b_{11}-\beta b_{12}-\frac{1}{2}\beta^2 b_{22}.\]
Applying the matrix inequality above with $\rho:=\epsilon_n^2$ to the vector $(1,\alpha,1,\beta)$, we obtain
\[a_{11}+2\alpha a_{12}+\alpha^2 a_{22}-b_{11}-2\beta b_{12}-\beta^2 b_{22} \leq
3\epsilon_n^{-2}(\alpha-\beta)^2 + 6\epsilon_n+2\epsilon_n^4.\]
Taking $\eta := \lambda$ and $\beta :=\alpha$, we obtain
\[\lambda \leq 6\epsilon_n+2\epsilon_n^4,\]
which is a contradiction for $\epsilon_n$ sufficiently small.
\end{enumerate}
\end{proof}

\bibliographystyle{siam}

\bibliography{TimeInconsistentStopping}

\begin{thebibliography}{10}

\bibitem{AnkirchnerKleinKruse2015}
{\sc S.~Ankirchner, M.~Klein, and T.~Kruse}, {\em {A verification theorem for
  optimal stopping problems with expectation constraints}}, HAL e-prints,
  (2015).

\bibitem{BayraktarMiller2016}
{\sc E.~{Bayraktar} and C.~W. {Miller}}, {\em {Distribution-Constrained Optimal
  Stopping}}, ArXiv e-prints,  (2016).

\bibitem{Bjork2014}
{\sc T.~Bj{\"o}rk and A.~Murgoci}, {\em A theory of {M}arkovian
  time-inconsistent stochastic control in discrete time}, Finance Stoch., 18
  (2014), pp.~545--592.

\bibitem{CrandallIshiiLions1992}
{\sc M.~G. Crandall, H.~Ishii, and P.-L. Lions}, {\em User's guide to viscosity
  solutions of second order partial differential equations}, Bull. Amer. Math.
  Soc. (N.S.), 27 (1992), pp.~1--67.

\bibitem{DaLioLey2010}
{\sc F.~Da~Lio and O.~Ley}, {\em Convex {H}amilton-{J}acobi equations under
  superlinear growth conditions on data}, Appl. Math. Optim., 63 (2011),
  pp.~309--339.

\bibitem{Ekeland2010}
{\sc I.~Ekeland and A.~Lazrak}, {\em The golden rule when preferences are time
  inconsistent}, Math. Financ. Econ., 4 (2010), pp.~29--55.

\bibitem{Horiguchi2001}
{\sc M.~Horiguchi}, {\em Markov decision processes with a stopping time
  constraint}, Mathematical methods of operations research, 53 (2001),
  pp.~279--295.

\bibitem{Hu2012}
{\sc Y.~Hu, H.~Jin, and X.~Zhou}, {\em Time-inconsistent stochastic
  linear--quadratic control}, SIAM J. Control Optim., 50 (2012),
  pp.~1548--1572.

\bibitem{IshiiLions1990}
{\sc H.~Ishii and P.-L. Lions}, {\em Viscosity solutions of fully nonlinear
  second-order elliptic partial differential equations}, J. Differential
  Equations, 83 (1990), pp.~26--78.

\bibitem{KaratzasShreve1991}
{\sc I.~Karatzas and S.~E. Shreve}, {\em Brownian motion and stochastic
  calculus}, vol.~113 of Graduate Texts in Mathematics, Springer-Verlag, New
  York, second~ed., 1991.

\bibitem{KarnamMaZhang2016}
{\sc C.~{Karnam}, J.~{Ma}, and J.~{Zhang}}, {\em {Dynamic Approaches for Some
  Time Inconsistent Problems}}, ArXiv e-prints,  (2016).

\bibitem{Kennedy1982}
{\sc D.~P. Kennedy}, {\em On a constrained optimal stopping problem}, Journal
  of Applied Probability, 19 (1982), pp.~631--641.

\bibitem{Krylov2009}
{\sc N.~V. Krylov}, {\em Controlled diffusion processes}, vol.~14 of Stochastic
  Modelling and Applied Probability, Springer-Verlag, Berlin, 2009.
\newblock Translated from the 1977 Russian original by A. B. Aries, Reprint of
  the 1980 edition.

\bibitem{LopezSanMiguel1995}
{\sc F.~L{\'o}pez, M.~San~Miguel, and G.~Sanz}, {\em Lagrangean methods and
  optimal stopping}, Optimization, 34 (1995), pp.~317--327.

\bibitem{Makasu2009}
{\sc C.~Makasu}, {\em Bounds for a constrained optimal stopping problem},
  Optimization Letters, 3 (2009), pp.~499--505.

\bibitem{MillerYang2015}
{\sc C.~W. {Miller} and I.~{Yang}}, {\em {Optimal Control of Conditional
  Value-at-Risk in Continuous Time}}, ArXiv e-prints,  (2015).

\bibitem{Oberman2008}
{\sc A.~Oberman}, {\em Wide stencil finite difference schemes for the elliptic
  monge-ampère equation and functions of the eigenvalues of the hessian},
  Discrete and Continuous Dynamical Systems Series B, 10 (2008), pp.~221--238.

\bibitem{PedersenPeskir2013}
{\sc J.~L. Pedersen and G.~Peskir}, {\em Optimal mean-variance selling
  strategies},  (2013).

\bibitem{PedersenPeskir2016}
\leavevmode\vrule height 2pt depth -1.6pt width 23pt, {\em Optimal
  mean-variance portfolio selection},  (2016).

\bibitem{PeskirShiryaev2006}
{\sc G.~Peskir and A.~Shiryaev}, {\em Optimal stopping and free-boundary
  problems}, Lectures in Mathematics ETH Z\"urich, Birkh\"auser Verlag, Basel,
  2006.

\bibitem{Rudloff2013}
{\sc B.~Rudloff, A.~Street, and D.~M. Valladao}, {\em Time consistency and risk
  averse dynamic desicion models: Definition, interpretation and practical
  consequences}, Submitted,  (2013).

\bibitem{Schulz1990}
{\sc F.~Schulz}, {\em Regularity theory for quasilinear elliptic systems and
  {M}onge-{A}mp\`ere equations in two dimensions}, vol.~1445 of Lecture Notes
  in Mathematics, Springer-Verlag, Berlin, 1990.

\bibitem{Touzi2013}
{\sc N.~Touzi}, {\em Optimal stochastic control, stochastic target problems,
  and backward {SDE}}, vol.~29 of Fields Institute Monographs, Springer, New
  York; Fields Institute for Research in Mathematical Sciences, Toronto, ON,
  2013.
\newblock With Chapter 13 by Ang{\`e}s Tourin.

\bibitem{XuZhou2013}
{\sc Z.~Q. Xu and X.~Y. Zhou}, {\em Optimal stopping under probability
  distortion}, Ann. Appl. Probab., 23 (2013), pp.~251--282.

\bibitem{Yong2012}
{\sc J.~Yong}, {\em Time-inconsistent optimal control problems and equilibrium
  hjb equation}, Math. Control Relat. Fields, 2 (2012), pp.~271--329.

\end{thebibliography}

\end{document}